\theoremstyle{plain}
\newtheorem{theorem}{Theorem}[section]
\newtheorem{est}[theorem]{Estimate}
\newtheorem{lemma}[theorem]{Lemma}
\theoremstyle{definition}
\newtheorem{defn}[theorem]{Definition}
\newtheorem*{notation}{Notation}
\theoremstyle{remark}
\newtheorem*{remark}{Remark}
\numberwithin{equation}{section}
\DeclareMathOperator{\supp}{supp}
\newcommand{\ud}{\,\mathrm{d}}
\newcommand{\CC}{\mathbb{C}}
\newcommand{\RR}{\mathbb{R}}
\newcommand{\weps}{\widetilde{\epsilon}}
\title{Synchrosqueezed Wavelet Transforms: a Tool for Empirical Mode
  Decomposition}
\author{Ingrid Daubechies, Jianfeng Lu and Hau-Tieng Wu\\
  Department of Mathematics and \\Program in Applied and Computational
  Mathematics\\ Princeton University, 08544\\
  ingrid@math.princeton.edu, jianfeng@math.princeton.edu,
  hauwu@math.princeton.edu}
\date{}
\begin{document}

\maketitle

\begin{abstract}
  The EMD algorithm, first proposed in \cite{Huang:98}, made more
  robust as well as more versatile in \cite{HuangWuLong:09}, is a
  technique that aims to decompose into their building blocks
  functions that are the superposition of a (reasonably) small number
  of components, well separated in the time-frequency plane, each of
  which can be viewed as approximately harmonic locally, with slowly
  varying amplitudes and frequencies. The EMD has already shown its
  usefulness in a wide range of applications including meteorology,
  structural stability analysis, medical studies -- see, e.g.
  \cite{HuangWu:08}. On the other hand, the EMD algorithm contains
  heuristic and ad-hoc elements that make it hard to analyze
  mathematically.
 
  In this paper we describe a method that captures the flavor and
  philosophy of the EMD approach, albeit using a different approach in
  constructing the components. We introduce a precise mathematical
  definition for a class of functions that can be viewed as a
  superposition of a reasonably small number of approximately harmonic
  components, and we prove that our method does indeed succeed in
  decomposing arbitrary functions in this class. We provide several
  examples, for simulated as well as real data.
\end{abstract}

\section{Introduction}
\label{intro}
Time-frequency representations provide a powerful tool for the
analysis of time dependent signals. They can give insight into the
complex structure of a ``multi-layered'' signal consisting of several
components, such as the different phonemes in a speech utterance, or a
sonar signal and its delayed echo. There exist many types of
time-frequency (TF) analysis algorithms; the overwhelming majority
belong to either ``linear'' or ``quadratic'' methods.

In ``linear'' methods, the signal to be analyzed is characterized by
its inner products with (or correlations with) a pre-assigned family
of templates, generated from one (or a few) basic template by simple
operations. Examples are the windowed Fourier transform, where the
family of templates is generated by translating and modulating a basic
window function, or the wavelet transform, where the templates are
obtained by translating and dilating the basic (or ``mother'')
wavelet. Many linear methods, including the windowed Fourier transform
and the wavelet transform, make it possible to reconstruct the signal
from the inner products with templates; this reconstruction can be for
the whole signal, or for parts of the signal; in the latter case, one
typically restricts the reconstruction procedure to a subset of the TF
plane.  However, in all these methods, the family of template
functions used in the method unavoidably ``colors'' the
representation, and can influence the interpretation given on
``reading'' the TF representation in order to deduce properties of the
signal. Moreover, the Heisenberg uncertainty principle limits the
resolution that can be attained in the TF plane; different trade-offs
can be achieved by the choice of the linear transform or the
generator(s) for the family of templates, but none is ideal, as
illustrated in Figure~\ref{fig_linear_tf_methods}.
\begin{figure}[ht]
\begin{minipage}{.3 \textwidth}
\begin{center}
\includegraphics[ width=.9 \textwidth,height=1.3 in]{signal.png}
\end{center}
\end{minipage}
\begin{minipage}{.04 \textwidth}
\begin{center}
\includegraphics[height=1.3 in]{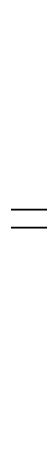}
\end{center}
\end{minipage}
\begin{minipage}{.3 \textwidth}
\begin{center}
\includegraphics[ width=.9 \textwidth, height=1.3 in]{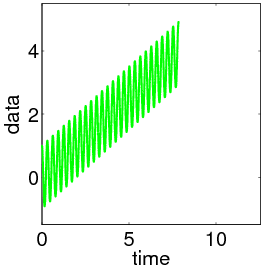}
\end{center}
\end{minipage}
\begin{minipage}{.04 \textwidth}
\begin{center}
\includegraphics[height=1.3 in]{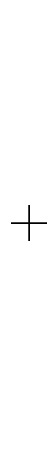}
\end{center}
\end{minipage}
\begin{minipage}{.3 \textwidth}
\begin{center}
\includegraphics[ width=.9 \textwidth, height=1.3 in]{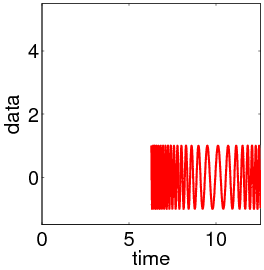}
\end{center}
\end{minipage}

\begin{minipage}{.3 \textwidth}
\vspace*{.9 in}

\begin{center}
\includegraphics[width=.9 \textwidth,  height=1.3 in]{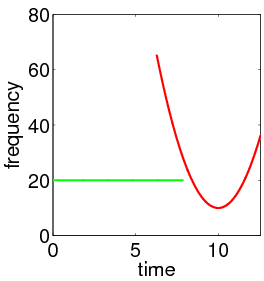}
\vspace*{-.9 in}

\end{center}
\end{minipage}
\hspace*{.04 \textwidth}
\begin{minipage}{.3 \textwidth}
\begin{center}
\includegraphics[width=.9 \textwidth, height=1.3 in]{LongWindowFourier.png}
\end{center}
\end{minipage}
\hspace*{.04 \textwidth}
\begin{minipage}{.3 \textwidth}
\begin{center}
\includegraphics[width=.9 \textwidth, height=1.3 in]{MorletWavelet.png}
\end{center}
\end{minipage}

\begin{minipage}{.3 \textwidth}
\vspace*{1.4 in}

$~$
\end{minipage}
\hspace*{.04 \textwidth}
\begin{minipage}{.3 \textwidth}
\begin{center}
\includegraphics[width=.9 \textwidth, height=1.3 in]{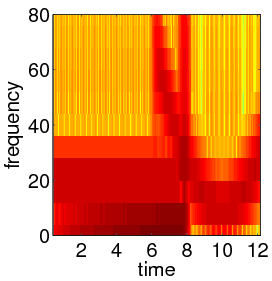}
\end{center}
\end{minipage}
\hspace*{.04 \textwidth}
\begin{minipage}{.3 \textwidth}
\begin{center}
\includegraphics[width=.9 \textwidth, height=1.3 in]{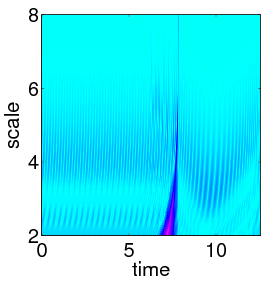}
\end{center}
\end{minipage}
\caption{\label{fig_linear_tf_methods} {\bf Examples of linear
    time-frequency representations.}\hspace*{.35 \textwidth} $~~~$ Top
  row: the signal $s(t)=s_1(t)+s_2(t)$ (top left) defined by $s_1(t) =
  .5t + \cos(20t)$ for $0 \leq t \leq 5 \pi/2$, (top middle) and
  $s_2(t)=\cos \left( \frac{4}{3}\,[ (t-10)^3 - (2 \pi-10)^3] + 10(t-
    2 \pi) \right)$ for $2 \pi \leq t\leq 4 \pi$ (top right); Next
  row: Left:the instantaneous frequency for its two components(left)
  $\omega(t)= 20$ for $0 \leq (t-10)^2 \leq 5 \pi/2$, and $\omega(t)=
  4t^2+10 $ for $2 \pi \leq t \leq 4 \pi$; Middle: two examples of
  (the absolute value of) a continuous windowed Fourier transform of
  $s(t)$, with a wide window (top) and a narrow window (bottom) [these
  are plotted with Matlab, with the `jet' colormap]; Right: two
  examples of a continuous wavelet transform of $s(t)$, with a Morlet
  wavelet (top) and a Haar wavelet (bottom) [plotted with `hsv'
  colormap in Matlab].  The instantaneous frequency profile can be
  clearly recognized in each of these linear TF representations, but
  it is ``blurred'' in each case, in different ways that depend on the
  choice of the transform.  }
\end{figure}
In ``quadratic'' methods to build a TF representation, one can avoid
introducing a family of templates with which the signal is
``compared'' or ``measured''. As a result, some features can have a
crisper, ``more focused'' representation in the TF plane with
quadratic methods (see Figure~\ref{fig_quadratic_tf_methods}).
\begin{figure}[ht]
\begin{center}
\begin{minipage}{.3 \textwidth}
\begin{center}
\includegraphics[ height=1.5 in]{WignerVille.png}
\end{center}
\end{minipage}
\hspace*{.03 \textwidth}
\begin{minipage}{.3 \textwidth}
\begin{center}
\includegraphics[height=1.5 in]{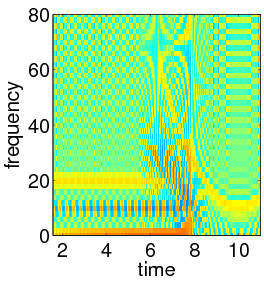}
\end{center}
\end{minipage}
\hspace*{.03 \textwidth}
\begin{minipage}{.3 \textwidth}
\begin{center}
\includegraphics[ height=1.5 in]{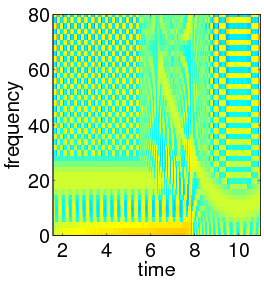}
\end{center}
\end{minipage}
\caption{\label{fig_quadratic_tf_methods} {\bf Examples of quadratic
    time-frequency representations.}  Left: the Wigner-Ville transform
  of $s(t)$, in which interference causes the typical Moir\'{e}
  patterns; Middle and Right: two pseudo-Wigner-Ville transforms of
  $s(t)$, obtained by blurring the Wigner-Ville transform slightly
  (middle) and somehwat more (right). [All three graphs plotted with
  `jet' colormap in Matlab, calibrated identically.] The blurring
  removes the interference patterns, at the cost of precise location
  in the time-frequency localization.  }
\end{center}
\end{figure}
However, in this case, ``reading'' the TF representation of a
multi-component signal is rendered more complicated by the presence of
interference terms between the TF representations of the individual
components; these interference effects also cause the ``time-frequency
density'' to be negative in some parts of the TF plane.  These
negative parts can be removed by some further processing of the
representation \cite{Flandrin:99}, at the cost of reintroducing some blur
in the TF plane again.  Reconstruction of the signal, or part of the
signal, is much less straightforward for quadratic than for linear TF
representations.

In many practical applications, in a wide range of fields (including,
e.g., medicine and engineering) one is faced with signals that have
several components, all reasonably well localized in TF space, at
different locations. The components are often also called
``non-stationary'', in the sense that they can present jumps or
changes in behavior, which it may be important to capture as
accurately as possible. For such signals both the linear and quadratic
methods come up short. Quadratic methods obscure the TF representation
with interference terms; even if these could be dealt with,
reconstruction of the individual components would still be an
additional problem. Linear methods are too rigid, or provide too
blurred a picture.  Figures~\ref{fig_linear_tf_methods},
\ref{fig_quadratic_tf_methods} show the artifacts that can arise in
linear or quadratic TF representations when one of the components
suddenly stops or starts.  Figure~\ref{fig_exs_am_fm} shows examples
of components that are non harmonic, but otherwise perfectly
reasonable as candidates for a single-component-signal, yet not well
represented by standard TF methods, as illustrated by the lack of
concentration in the time-frequency plane of the transforms of these
signals.
\begin{figure}[ht]
\begin{center}
\begin{minipage}{.22 \textwidth}
\begin{center}
\includegraphics[width=.9 \textwidth,height=1.3 in]{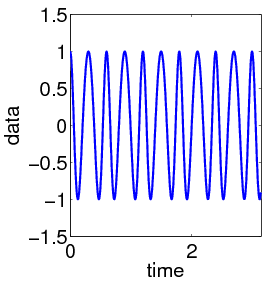}
\end{center}
\end{minipage}
\hspace*{.02 \textwidth}
\begin{minipage}{.22 \textwidth}
\begin{center}
\includegraphics[width=.9 \textwidth,height=1.3 in]{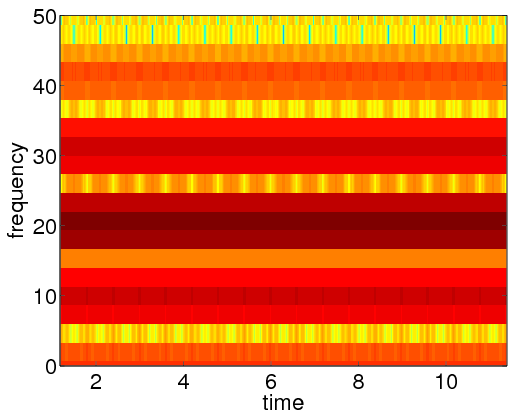}
\end{center}
\end{minipage}
\hspace*{.02 \textwidth}
\begin{minipage}{.22 \textwidth}
\begin{center}
\includegraphics[width=.9 \textwidth,height=1.3 in]{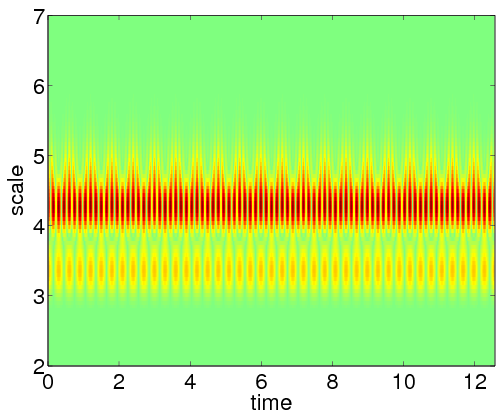}
\end{center}
\end{minipage}
\hspace*{.02 \textwidth}
\begin{minipage}{.22 \textwidth}
\begin{center}
\includegraphics[ width=.9 \textwidth,height=1.3 in]{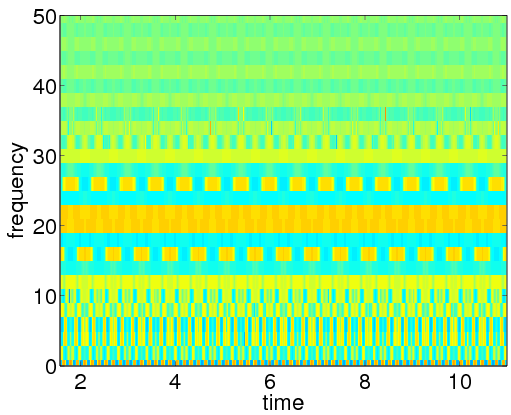}
\end{center}
\end{minipage}
\end{center}

\begin{center}
\begin{minipage}{.22 \textwidth}
\begin{center}
\includegraphics[ width=.9 \textwidth,height=1.3 in]{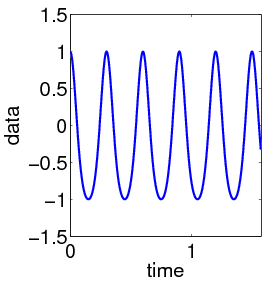}
\end{center}
\end{minipage}
\hspace*{.02 \textwidth}
\begin{minipage}{.22 \textwidth}
\begin{center}
\includegraphics[ width=.9 \textwidth,height=1.3 in]{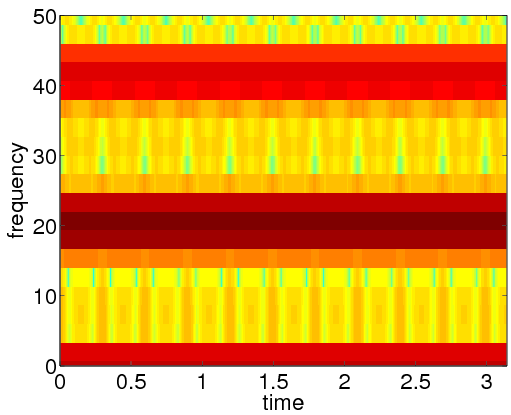}
\end{center}
\end{minipage}
\hspace*{.02 \textwidth}
\begin{minipage}{.22 \textwidth}
\begin{center}
\includegraphics[width=.9 \textwidth,height=1.3 in]{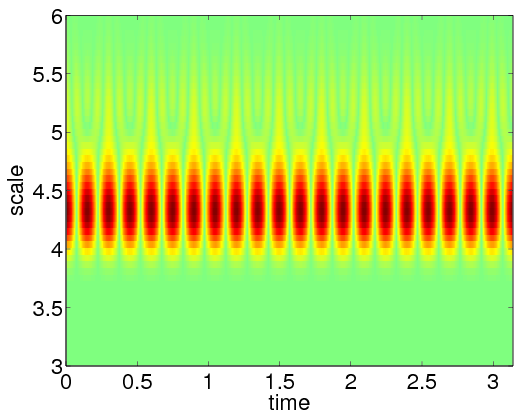}
\end{center}
\end{minipage}
\hspace*{.02 \textwidth}
\begin{minipage}{.22 \textwidth}
\begin{center}
\includegraphics[ width=.9 \textwidth,height=1.3 in]{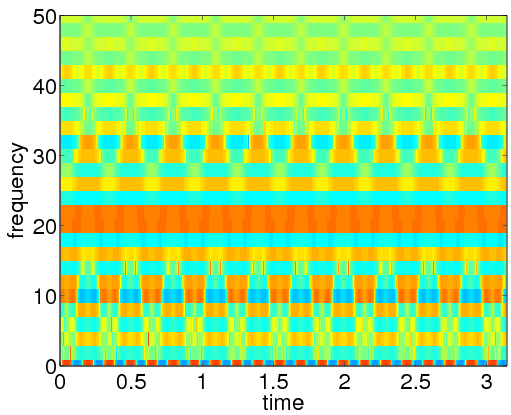}
\end{center}
\end{minipage}
\end{center}
\caption{\label{fig_exs_am_fm}Two examples of wave functions of the
  type $\cos[\phi(t)]$, with slowly varying $A(t)$ and $\phi'(t)$, for
  which standard TF representations are not very well localized in the
  time-frequence plane. Left: signal, Middle left: windowed Fourier
  transform; Middle Right: Morlet wavelet transform; Right:
  Wigner-Ville function. [All TF representations plotted with `jet'
  colormap in Matlab.]  }
\end{figure}
The {\em Empirical Mode Decomposition} (EMD) method was proposed by
Norden Huang \cite{Huang:98} as an algorithm that would allow
time-frequency analysis of such multicomponent signals, without the
weaknesses sketched above, overcoming in particular artificial
spectrum spread caused by sudden changes.  Given a signal $s(t)$, the
method decomposes it into several {\em instrinsic mode functions}
(IMF):
\begin{equation}\label{eq:EMD}
  s(t) \,=\, \sum_{k=1}^K s_k(t),
\end{equation}
where each IMF is basically a function oscillating around 0, albeit
not necessarily with constant frequency:
\begin{equation}\label{eq:IMF}
  s_k(t) \,=\, A_k(t)\, \cos(\phi_k(t))~,~\mbox{ with } A_k(t),\,\phi'_k(t)>0 ~ \forall t~.
\end{equation}
Essentially, each IMF is an amplitude modulated-frequency modulated
(AM-FM) signal; typically, the change in time of $A_k(t),\,\phi'_k(t)$
is much slower than the change of $\phi_k(t)$ itself, which means that
locally (i.e. in a time interval $[t-\delta,t+\delta]$, with $\delta
\approx 2\pi [\phi'_k(t)]^{-1}$) the component $s_k(t)$ can be
regarded as a harmonic signal with amplitude $A_k(t)$ and frequency
$\phi_k'(t)$.  (In \cite{Huang:98}, the conditions on an IMF are
phrased as follows: (1) in the whole data set, the number of extrema
and the number of zero crossings of $s_k(t)$ must either be equal or
differ at most by one; and (2) at any $t$, the value of a smooth
envelope defined by the local minima of the IMF is the negative of the
corresponding envelope defined by the local maxima.)  After the
decomposition of $s(t)$ into its IMF components, the EMD algorithm
proceeds to the computation of the ``instantaneous frequency'' of each
component. Theoretically, this is given by
$\omega_k(t)\,:=\,\phi'_k(t)$; in practice, rather than a (very
unstable) differentiation of the estimated $\phi_k(t)$, the originally
proposed EMD method used the Hilbert transform of the $s_k(t)$
\cite{Huang:98}; more recently, this has been replaced by other
methods \cite{HuangWuLong:09}.

It is obvious that every function can be written in the form
\eqref{eq:EMD} with each component as in \eqref{eq:IMF}. If $s(t)$ is
supported (or observed) in $[-T,T]$, then the Fourier series on
$[-T,T]$ of $s(t)$ is actually such a decomposition. It is also easy
to see that such a decomposition is far from unique. This is simply
illustrated by considering the following signal:
\begin{equation}\label{eq:threecosine}
s(t)\, =\,.25\, \cos([\Omega-\gamma] t)\, +\,2.5\, \cos (\Omega t)\,+\,.25\,\cos([\Omega+\gamma] t)
\,= \,\left(\,2\,+\,\cos^2\left[\,\frac{\gamma}{2}\,t\,\right]\,\right)\,\cos(\, \Omega t)~,
\end{equation}
where $\Omega \gg \gamma$, so that one can set
$A(t)\,:=\,2\,+\,\cos^2\left[\,\frac{\gamma}{2}\,t\,\right]\,,$ which varies 
much more slowly than $\cos[\phi(t)]\,=\,\cos[\Omega \,t]$. 
\begin{figure}[ht]
\begin{center}
\includegraphics[height=1.5 in]{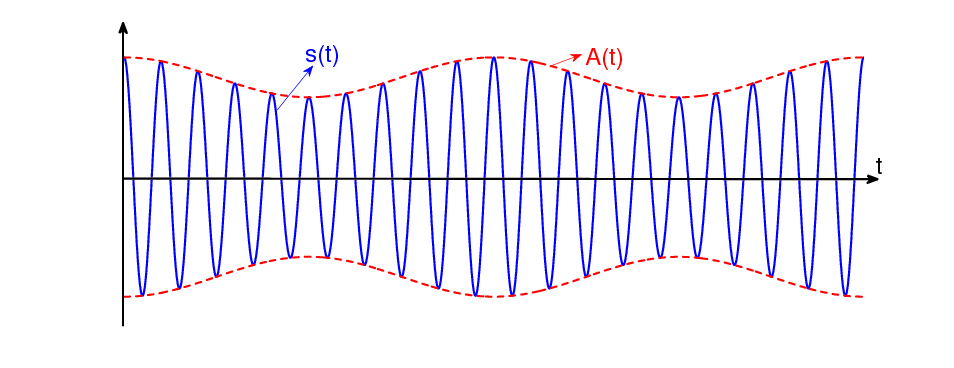}
\caption{\label{fig_3_cosines}{\b Non-uniqueness for decomposition
    into IMT.}  This function can be considered as a single component,
  of the type $A(t)\cos[\Omega t]$, with slowly varying amplitude, or
  as the sum of three components. (See text.)  }
\end{center}
\end{figure}
The interpretation of this signal is not unique: It can be regarded
either as a summation of three cosines with frequencies $\Omega
-\gamma$, $\Omega$ and $\Omega+ \gamma$ respectively, or as a single
component with frequency $\Omega$ which has an amplitude $A(t)$ that
is slowly modulated.  Depending on the circumstances, either
interpretation can be the ``best''.  In the EMD's framework, the
second interpretation (single component, with slowly varying
amplitude) is preferred when $\Omega \gg \gamma$; the EMD is typically
applied when it is more ``physically meaningful'' to decompose a
signal into {\em fewer} components if this can be achieved by mild
variations in frequency and amplitude; in those circumstances, this
preference is sensible.  The (toy) example illustrates that we should
not expect a universal solution to all TF decomposition problems. For
certain classes of functions, consisting of a (reasonably) small
number of components, well separated in the TF plane, each of which
can be viewed as approximately harmonic locally, with slowly varying
amplitdes and frequencies, it is clear, however, that a technique that
identifies these components accurately, even in the presence of noise,
has great potential for a wide range of applications.  Such a
decomposition should be able to accommodate such mild variations
within the building blocks of the decomposition.

The EMD algorithm, first proposed in \cite{Huang:98}, made more robust
as well as more versatile in \cite{HuangWuLong:09} (an extension to
higher dimensions is now possible), is such a technique.  It has
already shown its usefulness in a wide range of applications including
meteorology, structural stability analysis, medical studies -- see,
e.g. \cite{Costa:07, Cummings:04, Huang:98}; a recent review is given
in \cite{HuangWu:08}.  On the other hand, the EMD algorithm contains a
number of heuristic and ad-hoc elements that make it hard to analyze
mathematically its guarantees of accuracy or the limitations of its
applicability.  For instance, the EMD algorithm, uses a {\em sifting
  process} to construct the decomposition of type \eqref{eq:EMD}. In
each step in this sifting process, two smooth interpolating functions
are constructed (using cubic splines), one of the local maxima
($\bar{s}(t)$), and one of the local minima ($\underline{s}(t)$). From
these interpolates, a mean curve of the signal is defined as $m(t) =
(\bar{s}(t) + \underline{s}(t))/2$, which is then subtracted from the
signal: $r_1(t) = s(t) - m(t)$. In most cases, $r_1$ is not yet a
satisfactory IMF; the process is then repeated on $r_1$ again, etc
$\ldots$; this repeated process is called ``sifting''.  Sifting is
done for either a fixed number of times, or until a certain stopping
criterium is satisfied; the final remainder $r_n(t)$ is taken as the
first IMF, $s_1:=r_n$.  The algorithm continues with the difference
between the original signal and the first IMF to extract the second
IMF (which is the first IMF obtained from the ``new starting signal''
$s(t)-s_1(t)$) and so on. (Examples of the decomposition will be given
in Section~\ref{numerical}.)  Because the sifting process relies
heavly on interpolates of maxima and minima, the end result has some
stability problems in the presence of noise, as illustrated in
\cite{WuHuang:09}. The solution proposed in \cite{WuHuang:09}
addresses these issues in practice, but poses new challenges to our
mathematical understanding.

Attempts at a mathematical understanding of the approach and the
results produced by the EMD method have been mostly exploratory.  A
systematic investigation of the performance of EMD acting on white
noise was carried out in \cite{FlandrinRillingGoncalves:04,
  WuHuang:04}; it suggests that in some limit, EMD on signals that
don't have structure (like white noise) produces a result akin to
wavelet analysis. The decomposition of signals that are superpositions
of a few cosines was studied in \cite{RillingFlandrin:08}, with
interesting results.  A first different type of study, more aimed at
building a mathematical framework, is given in \cite{LinWangZhou:09,
  HuangYangWang:09}, which analyzes mathematically the limit of an
infinite number of ``sifting'' operations, showing it defines a
bounded operator on $\ell_{\infty}$, and studies its mathematical
properties.

In summary, the EMD algorithm has shown its usefulness in various
applications, yet our mathematical understanding of it is still very
sketchy. In this paper we discuss a method that captures the flavor
and philosophy of the EMD approach, without necessarily using the same
approach in constructing the components.  We hope this approach will
provide new light in understanding of what makes EMD work, when it can
be expected to work (and when not) and what type of precision we can
expect.

\section{Synchrosqueezing Wavelet Transforms}
\label{synchro}

Synchrosqueezing was introduced in the context of analyzing auditory
signals \cite{DaubechiesMaes:96}; it is a special case of {\em
  reallocation methods} \cite{AugerFlandrin:95,
  Chassande-MottinAugerFlandrin:03,
  Chassande-MottinDaubechiesAuger:97}, which aim to ``sharpen'' a
time-frequency representation $\mathcal{R}(t,\omega)$ by
``allocating'' its value to a different point $(t',\omega')$ in the
time-frequency plane, determined by the local behavior of
$\mathcal{R}(t,\omega)$ around $(t,\omega)$.  In the case of
synchrosqueezing, one reallocates the coefficients resulting from a
continuous wavelet transform to get a concentrated time-frequency
picture, from which instantaneous frequency lines can be extracted.

To motivate the idea, let us start with a purely harmonic signal,
\[
s(t) = A \cos(\omega t).
\]
Take a wavelet $\psi$ that is concentrated on the positive frequency axis:
$\hat{\psi}(\xi)=0$ for $\xi<0$. Denote by $W_s(a,b)$ the continuous wavelet
transform of $s$ defined by this choice of $\psi$. We have
\begin{equation}
\begin{split}
  W_s(a,b) & = \int s(t) \,a^{-1/2}\, \overline{\psi\bigl(\frac{t-b}{a}\bigr)} \ud t\\
  & = \frac{1}{2\pi} \int \hat{s}(\xi) \,a^{1/2} \,\overline{\hat{\psi}(a\xi)}\,e^{ib\xi} \ud \xi\\
  & = \frac{A}{4\pi} \int [\delta(\xi-\omega) + \delta(\xi+\omega)]\,
  a^{1/2}
  \,\overline{\hat{\psi}(a\xi)}\,e^{ib\xi} \ud \xi\\
  & = \frac{A}{4\pi} \,a^{1/2}\,
  \overline{\hat{\psi}(a\omega)}\,e^{ib\omega}.
\end{split}
\end{equation}
If $\hat{\psi}(\xi)$ is concentrated around $\xi=\omega_0$, then $W_s(a,b)$
will be concentrated around $a = \omega_0/\omega$. However, the wavelet transform $W_s(a,b)$ will be spread
out over a region around the horizontal line $a = \omega_0/\omega$ on the time-scale plane.
The observation made in \cite{DaubechiesMaes:96} is that although $W_s(a,b)$ is spread out in $a$, its oscillatory behavior in $b$ points to the original frequency $\omega$, regardless of the value of $a$.   

This led to the suggestion to compute, for any $(a,b)$ for which $W_s(a,b)\neq 0$,
a candidate instantaneous frequency $\omega(a,b)$ by
\begin{equation}
\label{eq:omega}
\omega(a,b) = -i (W_s(a,b))^{-1} \frac{\partial}{\partial b} W_s(a,b).
\end{equation}
For the purely harmonic signal $s(t) = A \cos(\omega t)$, one obtains 
$\omega(a,b) = \omega$, as desired; this is illustrated in Figure \ref{fig:harmonic}
\begin{figure}[ht]
\centering
\begin{minipage}{.3 \textwidth}
\begin{center}
\includegraphics[width=0.9\textwidth, height=1.3 in]{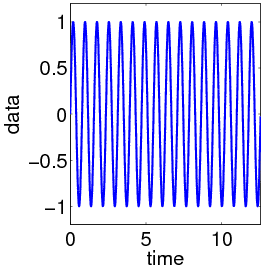}
\end{center}
\end{minipage}
\hspace*{.03 \textwidth}
\begin{minipage}{.3 \textwidth}
\begin{center}
\includegraphics[width=0.9\textwidth, height=1.3 in]{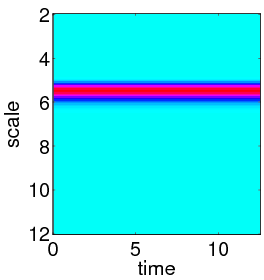}
\end{center}
\end{minipage}
\hspace*{.03 \textwidth}
\begin{minipage}{.3 \textwidth}
\begin{center}
\includegraphics[width=0.9\textwidth, height=1.3 in]{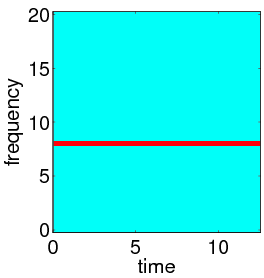}
\end{center}
\end{minipage}
\caption{ Left: the harmonic signal $ f(t)=\sin(8 t)$; Middle: the
  continuous wavelet transform of $f$; Right: synchrosqueezed
  transform of $f$. \label{fig:harmonic}}
\end{figure}
In a next step, the information from the time-scale plane is
transferred to the time-frequency plane, according to the map $(b,a)
\longrightarrow (b,\omega(a,b))$, in an operation dubbed {\em
  synchrosqueezing}. In \cite{DaubechiesMaes:96}, the frequency
variable $\omega$ and the scale variable $a$ were ``binned'', i.e.
$W_s(a,b)$ was computed only at discrete values $a_k$, with
$a_k-a_{k-1}=(\Delta a)_k$, and its synchrosqueezed transform
$T_s(\omega,b)$ was likewise determined only at the centers
$\omega_{\ell}$ of the successive bins
$\left[\omega_{\ell}-\frac{1}{2}\Delta
  \omega,\omega_{\ell}+\frac{1}{2}\Delta \omega\right]$, with
$\omega_{\ell}-\omega_{\ell-1}=\Delta \omega$, by summing different
contributions:
\begin{equation}\label{eq:squeeze}
T_s(\omega_{\ell},b) = (\Delta \omega)^{-1}\sum_{a_k: \lvert \omega(a_k,b)- \omega_l\rvert\leq \Delta\omega/2} W_s(a_k,b)\,a_k^{-3/2}\,(\Delta a)_k.
\end{equation}
The following argument shows that the signal can still be reconstructed after the synchrosqueezing.
We have
\begin{equation}
\begin{split}
\int_0^{\infty} W_s(a,b) \,a^{-3/2} \ud a
& = \frac{1}{2\pi} \int_{-\infty}^{\infty}\int_0^{\infty} \hat{s}(\xi)\, \overline{\hat{\psi}(a\xi)}\, e^{ib\xi}\,a^{-1} \ud a\ud\xi \\
& = \frac{1}{2\pi} \int_0^{\infty}\int_0^{\infty} \hat{s}(\xi) \,\overline{\hat{\psi}(a\xi)}\, e^{ib\xi}\,a^{-1} \ud a\ud\xi \\
& = \int_0^{\infty} \overline{\hat{\psi}(\xi)}\,\frac{\ud\xi}{\xi}\cdot
\frac{1}{2\pi} \int_0^{\infty} \hat{s}(\zeta)\, e^{ib\zeta} \ud\zeta .
\end{split}
\end{equation}
Setting $C_{\psi}\,=\,2\int_0^{\infty} \overline{\hat{\psi}(\xi)}\,\frac{\ud\xi}{\xi}$, we then obtain (assuming that
$s$ is real, so that $\hat{s}(\xi)=\overline{\hat{s}(-\xi)}$, hence $s(b) = (4\pi)^{-1} \mathfrak{Re}\left[\int_0^{\infty} \hat{s}(\xi)\, e^{ib\xi} \ud\xi\,\right]\,$)
\begin{equation}
s(b) = \mathfrak{Re}\left[C_{\psi}^{-1}\, \int_0^{\infty} W_s(a,b) \,a^{-3/2} \ud a\,\right].
\end{equation}
In the piecewise constant approximation corresponding to the binning in $a$, this becomes
\begin{equation}
s(b) \approx \mathfrak{Re}\left[C_{\psi}^{-1}\, \sum_k W_s(a_k,b) \,a_k^{-3/2} \,(\Delta a)_k \,\right]\,=\, \mathfrak{Re}\left[C_{\psi}^{-1}\, \sum_{\ell} T_s(\omega_{\ell},b)\,(\Delta \omega)\right].
\end{equation}

\begin{remark}
As defined above, \eqref{eq:squeeze} implicitly assumes a linear scale discretization
of $\omega$. If instead logarithmic discretization is used, the $\Delta \omega$ has to be made dependent on $\ell$; alternatively, one can also change 
the exponent of $a$ from $-3/2$ to $-1/2$.
\end{remark}

If one chooses (as we shall do here) to continue to treat $a$ and $\omega$ as continuous variables, without discretization, the analog of \eqref{eq:squeeze} is
\begin{equation}
\label{eq:squeeze_cont}
\mathcal{T}_s(\omega,b) =  \int_{A(b)} W_s(a,b) \, a^{-3/2} \, \delta(\,\omega(a,b)-\omega\,) \ud a ,
\end{equation}
where $A(b)=\{\,a\,;\,W_s(a,b) \neq 0 \,\}$, and $\omega(a,b)$ is as defined in 
\eqref{eq:omega} above, for $(a,b)$ such that $a \in \,A(b)$.

\begin{remark}
In practice, the determination of those $(a,b)$-pairs for which $W_s(a,b)=0$ is rather unstable, when $s$ has been contaminated by noise. For this reason, it is often useful
to consider a threshold for $\lvert W_s(a,b) \rvert$, below which $\omega(a,b)$ is not defined; this amounts to replacing $A(b)$ by the smaller region $A_{\epsilon}(b):=\{a\,;\,\lvert W_s(a,b) \rvert\geq \epsilon\,\}$.  
\end{remark}

One can also view
synchrosqueezing as follows. For sufficiently ``nice'' $s$ and $\psi$, we have, for $a \in A(b)$, 
\begin{align}
\omega(a,b) &= -i (W_s(a,b))^{-1} \frac{\partial}{\partial b} W_s(a,b) \\
&= \frac{\int \xi \,\hat{s}(\xi)\,\overline{\hat{\psi}(a\xi)}\, e^{ib\xi} \ud\xi}
{\int \hat{s}(\xi)\,\overline{\hat{\psi}(a\xi)}\, e^{ib\xi}\ud\xi} \\
&= \frac{\int -i s'(t)\,a^{-1/2}\,\overline{\psi\bigl(\frac{t-b}{a}\bigr)}\ud t}
{\int s(t)\,a^{-1/2}\,\overline{\psi\bigl(\frac{t-b}{a}\bigr)}\ud t} \\
&= \frac{\int -i \,(s+iHs)'(t)\,a^{-1/2}\,\overline{\psi\bigl(\frac{t-b}{a}\bigr)}
\ud t}{\int (s+iHs)(t)\,a^{-1/2}\,\overline{\psi\bigl(\frac{t-b}{a}\bigr)}\ud t},
\end{align}
where $Hs$ denotes the Hilbert transform of $s$; the last equality uses
that $\hat{\psi}$ is supported on the positive frequencies only.
If now $s$ is a so-called ``asymptotic signal'', i.e., if
\begin{equation}
s(t) = a(t)\cos(\phi(t)),
\end{equation}
with $a'(t)\ll 1$ and $\phi''(t) \ll \phi'(t)$, then the Hilbert
transform of $s$ is (approximately) given
by
\begin{equation}
Hs(t) \sim a(t)\sin(\phi(t)).
\end{equation}
Therefore, using the above expression of
$\omega(a,b)$, we have approximately
\begin{align}
\omega(a,b) &\sim
\frac{\int a(t)\,\phi'(t)\,e^{i\phi(t)}\,a^{-1/2}\,\overline{\psi\bigl(\frac{t-b}{a}
\bigr)}\ud t}{\int a(t)\,e^{i\phi(t)}\,a^{-1/2}\,\overline{\psi\bigl(\frac{t-b}{a}
\bigr)}\ud t}\sim \phi'(b),
\end{align}
where, in the first approximation, we have omitted the term containing $a'(t)$ as $a'(t)
\ll \phi'(t)$, and in the second approximation, we have used that $\psi$ is
localized around $0$.

This heuristic argument suggests that, for asymptotic signals, synchrosqueezing an 
appropriate wavelet transform will 
indeed give a single
line on the time-frequency plane, at the value of the 
``instantaneous frequency'' of a (putative) IMF.

\section{Main Result}
\label{main_result}
We define a class of functions, containing {\em intrinsic mode type}
components that are {\em well-separated}, and show that they can be
identified and characterized by means of synchrosqueezing.

We start with the following definitions:

\begin{defn}{\bf{[Intrinsic Mode Type Function]}} \\
A function 
$f: \RR \rightarrow \CC $ is said to be {\em intrinsic-mode-type (IMT) with
accuracy $\epsilon\,>\,0$} if $f$ and $A\,:=\,|f|$ have the following properties:
\begin{eqnarray}
f(t)\, =&\, A(t)\, e^{i \phi(t)} ~~~~~ \mbox{ where }& A \in C^1(\RR), \, \phi \in C^2(\RR)\nonumber\\
&& \inf_{t\in\RR}\phi'(t)  \,>\, 0 ~~,\nonumber\\
&& |A'(t)|,\, |\phi''(t)| \leq \epsilon \,|\phi'(t)|\,, \,\forall t \in \RR\nonumber\\  
&& M''\,:=\,\sup_{t \in \RR}|\phi''(t)|< \infty~.\nonumber
\end{eqnarray}
\end{defn}

\begin{defn}{\bf{[Superposition of Well-Separated Intrinsic Mode Components]}}\\
A function 
$f: \RR \rightarrow \CC $ is said to be a superposition of, or to consist of,
{\em well-separated Intrinsic Mode Components, up to accuracy $\epsilon$, 
and with separation $d$} if it can be written as 
\[
f(t)\,=\,\sum_{k=1}^K\, f_k(t)\, 
\]
where all the $f_k$ are IMT, and where moreover their respective phase functions $\phi_k$ satisfy
\[
\phi'_{k}(t)>\phi'_{k-1}(t)\,,~~ \mbox{ and } ~~~
 |\phi'_k(t) \,-\, \phi'_{k-1}(t)| \geq d [\phi'_k(t) \,+\, \phi'_{k-1}(t)]\,,\quad\forall t \in \RR\,.
\]
\end{defn}

\begin{remark}
It is not really necessary for the components $f_k$ to be defined on all of $\RR$. One can also suppose that they are supported on intervals,
$\supp(f_k)\,=\,\supp(A_k) \subset [-T_k,T_k]$, where the different $T_k$
need not be identical. In this case the various inequalities governing the
definition of an IMT function or a superposition of well-separated IMT components must simply be restricted to the relevant intervals. For the 
inequality above on the $\phi'_{k}(t), \,\phi'_{k-1}(t)$, it may happen that some $t$ are covered by (say) $[-T_k,T_k]$ but not by 
$[-T_{k-1},T_{k-1}]$; one should then replace $k-1$ by the largest 
$\ell <k$ for which $t \in [-T_{\ell},T_{\ell}]$;
other, similar, changes would have to be made if $t \in [-T_{k-1},T_{k-1}]\setminus [-T_k,T_k]$.\\
We omit this extra wrinkle for the sake of keeping notations manageable.
\end{remark}

\begin{notation}{\bf{[Class $\mathcal{A}_{\epsilon, d}$]}}\\
We denote by $\mathcal{A}_{\epsilon, d}$ the set of all superpositions of well-separated IMT, up to accuracy $\epsilon$ and 
with separation $d$.
\end{notation}

Our main result is then the following:

\begin{theorem}{\bf{[Main result]}}\\
Let $f$ be a function in $\mathcal{A}_{\epsilon,d}$, and set $\widetilde{\epsilon}:=\epsilon^{1/3}$. Pick a wavelet
$\psi$ such that its Fourier transform $\widehat{\psi}$ is supported 
in $[1-\Delta,1+\Delta]$, with $\Delta <d/(1+d)$, and set
$\mathcal{R}_{\psi}\,=\,\sqrt{2\pi}\,\int\,\widehat{\psi}(\zeta) \, \zeta^{-1}\,d\zeta\,$.
Consider the continuous wavelet transform $W_f(a,b)$ of $f$ with respect to this wavelet, 
as well as the function $S_{f,\sigma}(b,\omega)$ obtained by
synchrosqueezing $W_f$, with threshold $\weps$, i.e.
\[
S_{f,\weps}(b,\omega)\,:=\,\int_{A_{\weps,f}(b)}\,W_f(a,b)\,\delta(\omega -\omega_{f}(a,b))\,a^{-3/2}\,da~,
\]
where $A_{\weps,f}(b)\,:=\,\{a \in \RR_+\,;\,|W_f(a,b)| \,>\,\weps \,\}~.$\\
Then, provided $\epsilon$ (and thus also $\weps$) is sufficiently small, the following hold:\\
~\\
$\bullet \, |W_f(a,b)|\,>\,\weps$ only when, for some 
$k \in \{1,\ldots,K\}~ $, $(a,b) \in Z_k\,:=\,$
$\{(a,b)\,;\, |\,a\,\!\phi'_k(b)\,-\,1\,|\,<\,\Delta\}\,$. \\
~\\
$\bullet$ For each $k \in \{1,\ldots,K\}$, and
for each pair $(a,b) \in Z_k$ for which
$|W_f(a,b)|\,>\,\weps$, we have
\[
|\omega_f(a,b)\,-\, \phi'_k(b)\,|\,\leq\,\weps~.
\]
$\bullet$ Moreover, for each $k \in \{1,\ldots,K\}$, there exists a
constant $C$ such that, for any $b \in \RR$,
\[
\left|\,\left(\,\mathcal{R}_{\psi}^{-1} \int_{|\omega-\phi'_k(b)|<\weps}\,S_{f,\weps}(b,\omega)\,d\omega\,\right)\,-\,A_k(b)\,e^{i\,\phi_k(b)}\,\right|\,\leq\, C \,\weps~.\nonumber
\]
\label{mainthm}
\end{theorem}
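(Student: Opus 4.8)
The plan is to reduce the whole theorem to one sharp estimate for the wavelet transform of a \emph{single} IMT component, and then to let the well-separation hypothesis decouple the $K$ components. First I would compute, for a single component $f_k(t)=A_k(t)e^{i\phi_k(t)}$, the change of variables $t=b+au$ that turns the transform into $W_{f_k}(a,b)=a^{1/2}\int A_k(b+au)\,e^{i\phi_k(b+au)}\,\overline{\psi(u)}\,du$. Taylor-expanding $\phi_k(b+au)=\phi_k(b)+a\phi_k'(b)u+\tfrac12\phi_k''(\zeta)(au)^2$ and freezing $A_k(b+au)\approx A_k(b)$ isolates the leading term $M_k(a,b):=a^{1/2}A_k(b)e^{i\phi_k(b)}\,\overline{\widehat\psi(a\phi_k'(b))}$. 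Using the IMT bounds $|A_k'|,|\phi_k''|\le\epsilon|\phi_k'|$ together with a few absolute moments of $\psi$ (namely $\int|u\psi(u)|\,du$ and $\int u^2|\psi(u)|\,du$), the remainder $E_k:=W_{f_k}-M_k$ obeys $|E_k(a,b)|\le C\epsilon\,a^{1/2}|A_k(b)|$ on the relevant band, with an analogous bound for $\partial_b W_{f_k}$ whose leading part is $i\phi_k'(b)M_k(a,b)$.

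Two features of this estimate do the work. Because $\widehat\psi$ is supported in $[1-\Delta,1+\Delta]$, $M_k$ is \emph{identically zero} unless $a\phi_k'(b)\in[1-\Delta,1+\Delta]$, i.e.\ unless $(a,b)\in Z_k$; and the separation $|\phi_k'-\phi_{k-1}'|\ge d(\phi_k'+\phi_{k-1}')$ forces any admissible frequency ratio to satisfy $\phi_k'/\phi_{k-1}'\ge(1+d)/(1-d)$, whereas two overlapping bands would force $\phi_k'/\phi_{k-1}'\le(1+\Delta)/(1-\Delta)$; since $\Delta<d/(1+d)<d$ these are incompatible, so the $Z_k$ are pairwise disjoint and, on $Z_k$, every $M_j$ with $j\ne k$ vanishes identically. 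Thus on $Z_k$ one has $W_f=M_k+E_k+\sum_{j\ne k}E_j$, all error terms being $O(\epsilon)$ relative to the local scale $a^{1/2}|A_k(b)|$, while off $\bigcup_k Z_k$ every leading term vanishes and $|W_f|\le\sum_j|E_j|=O(\epsilon)<\weps$ once $\epsilon$ is small (recall $\weps=\epsilon^{1/3}$). This is the first bullet.

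For the second bullet I would write $\omega_f(a,b)-\phi_k'(b)=\bigl(-i\,\partial_bW_f-\phi_k'(b)W_f\bigr)/W_f$. The numerator is, by the derivative estimate, a sum of remainder terms of size $O(\epsilon\,a^{1/2}|A_k(b)|)$, whereas the denominator satisfies $|W_f|>\weps$ by hypothesis; hence $|\omega_f-\phi_k'|=O(\epsilon/\weps)=O(\epsilon^{2/3})\le\weps$. This is precisely the point at which the calibration $\weps=\epsilon^{1/3}$ is exploited: it is the largest threshold for which dividing an $O(\epsilon)$ numerator by it still returns an error below $\weps$. For the third bullet I would integrate out the delta in the definition of $S_{f,\weps}$, obtaining
\[
\int_{|\omega-\phi_k'(b)|<\weps}S_{f,\weps}(b,\omega)\,d\omega=\int_{\{a\in A_{\weps,f}(b):\,|\omega_f(a,b)-\phi_k'(b)|<\weps\}}W_f(a,b)\,a^{-3/2}\,da ,
\]
and note that the first two bullets identify this $a$-set with $Z_k\cap A_{\weps,f}(b)$. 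Replacing $W_f$ by $M_k$ and substituting $\zeta=a\phi_k'(b)$ collapses the integral to $A_k(b)e^{i\phi_k(b)}\int\overline{\widehat\psi(\zeta)}\,\zeta^{-1}\,d\zeta=\mathcal{R}_\psi\,A_k(b)e^{i\phi_k(b)}$ (up to the Fourier-normalization constant built into $\mathcal{R}_\psi$), so that dividing by $\mathcal{R}_\psi$ returns exactly the target $A_k(b)e^{i\phi_k(b)}$.

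The main obstacle is the error bookkeeping in this last step, not the algebra of the leading term. Two pieces must be controlled uniformly in $b$. The contribution $\int_{Z_k}|E_k|\,a^{-3/2}\,da$ is $O(\epsilon)$ and harmless, but the mismatch between the ideal band $Z_k$ and the actual threshold region $A_{\weps,f}(b)$ is delicate: near the band edges $a\phi_k'(b)=1\pm\Delta$ (and wherever $A_k(b)$ is small) one has $|W_f|\le\weps$, so a thin sliver of $Z_k$ is discarded. After the $\zeta$-substitution its contribution is $|A_k(b)|\int_{\text{sliver}}|\widehat\psi(\zeta)|\,\zeta^{-1}\,d\zeta$, which I must show is $O(\weps)$ by quantifying the rate at which $\widehat\psi$ vanishes at the band edges and absorbing the signal-dependent quantities — the wavelet moments, $\|A_k\|_\infty$, and $\sup_b\phi_k'(b)$ — into the constant $C$. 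Establishing the single-component remainder bound uniformly (including the cross-term decay of $W_{f_j}$ on $Z_k$ for $j\ne k$, which is what the margin $\Delta<d/(1+d)$ really buys) and then taming this threshold sliver are the two places where real care, rather than routine expansion, is required.
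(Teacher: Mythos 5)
Your overall architecture is the same as the paper's: Taylor-expand each component around $t=b$ to isolate the leading term $\sqrt{2\pi}\,a^{1/2}A_k(b)\,e^{i\phi_k(b)}\,\widehat{\psi}(a\phi'_k(b))$ with $O(\epsilon)$ remainder (the paper's Estimates \ref{est1}, \ref{est2} and \ref{est4}), use the compact support of $\widehat{\psi}$ plus the separation hypothesis to make the zones $Z_k$ pairwise disjoint (Lemma \ref{zone_lemma}; your ratio argument is a valid variant, and in fact closes under the weaker condition $\Delta<d$), and get the second bullet by dividing an $O(\epsilon)$ numerator by the threshold $\weps=\epsilon^{1/3}$, exactly as in Estimate \ref{est5}. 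The first two bullets are sound as sketched. (One small inaccuracy: your remainder bound $C\epsilon\,a^{1/2}|A_k(b)|$ cannot be proportional to $|A_k(b)|$, since the amplitude-variation term is controlled by $\epsilon|\phi'_k(b)|$, not by $\epsilon|A_k(b)|$ --- compare $\Gamma_1$ in Estimate \ref{est2}; this washes into constants under the paper's standing boundedness assumptions, but as stated it is false where $A_k(b)$ is small.)

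The gap is in the third bullet, precisely at the spot you flag as the ``main obstacle.'' Your plan for the sliver $Z_k\setminus A_{\weps,f}(b)$ --- replace $W_f$ by $M_k$ there and control $|A_k(b)|\int_{\mathrm{sliver}}|\widehat\psi(\zeta)|\,\zeta^{-1}\,d\zeta$ ``by quantifying the rate at which $\widehat\psi$ vanishes at the band edges'' --- would fail: the hypotheses give no quantitative rate of vanishing of $\widehat{\psi}$ at $1\pm\Delta$ (only its support and moment bounds on $\psi$), and, as you yourself note, the sliver is not confined to the band edges; it contains every $a$ at which $|W_f|$ dips below threshold, e.g.\ because $A_k(b)$ is small. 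The correct handling is the opposite of delicate, and is what the paper does in Estimate \ref{est6}: on $Z_k\setminus A_{\weps,f}(b)$ one has $|W_f(a,b)|\le\weps$ \emph{by the very definition of the threshold set}, so the discarded contribution is at most $\weps\int_{Z_k}a^{-3/2}\,da$, which is $O(\weps)$ uniformly in $b$ because on $Z_k$ the scale $a$ lies in the interval $\bigl((1-\Delta)/\phi'_k(b),\,(1+\Delta)/\phi'_k(b)\bigr)$, bounded away from $0$. No property of $\widehat{\psi}$ beyond its support is needed. A second, related omission: your claim that ``the first two bullets identify'' the $a$-set $\{a\in A_{\weps,f}(b):\,|\omega_f(a,b)-\phi'_k(b)|<\weps\}$ with $Z_k\cap A_{\weps,f}(b)$ is incomplete. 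To exclude points of $Z_\ell$ with $\ell\neq k$ you need $|\phi'_k(b)-\phi'_\ell(b)|>2\weps$, i.e.\ an additional smallness condition tying $\weps$ to the separation --- the paper's condition (\ref{third_cond_eps}), which yields (\ref{auxiliary}); without it, a point whose $\omega_f$ lies within $\weps$ of $\phi'_\ell$ could still fall inside the window around $\phi'_k$. Both repairs are short, but as proposed the final estimate does not close.
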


This theorem basically tells us that, for $f \in \mathcal{A}_{\epsilon, d}$, 
the synchrosqueezed version $S_{f,\weps}$ of the wavelet transform
$W_f$ is completely concentrated, in the $(t,\omega)$-plane, in narrow
bands around the curves $\omega\,=\,\phi'_k(t)$, and that the
restriction of $S_{f,\weps}$ to the $k$-th narrow band suffices to
reconstruct, with high precision, the $k$-th IMT component of $f$.
Synchrosqueezing (an appropriate) wavelet transform thus provides the
{\em adaptive time-frequency decomposition} that is the goal of
Empirical Mode Decomposition.

The proof of Theorem \ref{mainthm} relies on a number of estimates,
which we demonstrate one by one, at the same time providing more
details about what it means for $\epsilon$ to be ``sufficiently
small''.  In the statement and proof of all the estimates in this
section, we shall always assume that all the conditions of Theorem
\ref{mainthm} are satisfied (without repeating them), unless stated
otherwise.

The first estimate bounds the growth of the $A_k$, $\phi'_k$ in the neighborhood of $t$, in terms of the value of $|\phi'_k(t)|$.
\begin{est}
For each $k \in \{1,\ldots,K\}$, we have
\[
|A_k(t+s)\,-\,A_k(t)| \,\leq \, \epsilon\,|s|\,\left(\,|\phi'_k(t)|\,+\,\frac{1}{2}\,M''_k\,|s| \,\right)
~~ \mbox{ and } ~~ |\phi'_k(t+s)\,-\,\phi'_k(t)| \,\leq \,\epsilon\,|s|\,\left(\,|\phi'_k(t)|\,+\,\frac{1}{2}\,M''_k\,|s| \,\right) ~.
\]
\label{est1}
\end{est}

\begin{proof}
\begin{eqnarray}
\left|\,A_k(t+s)\,-\,A_k(t)\,\right|\,&=& \,\left|\, \int_0^s\, A'_k(t+u)\,du\,\right|\nonumber \\
& \leq & \, \int_0^s\,\left|\, A'_k(t+u)\right|\, du \, \leq\, \epsilon \, \int_0^s\,\left|\, \phi'_k(t+u)\right|\, du~. \nonumber \\
&= & \,\epsilon \, \int_0^s\,\left|\, \phi'_k(t)\,+\,\int_0^u\,\phi''_k(t+u)\,du\,\right|\,dt
\,\leq\,\epsilon \,\left(\,\left|\phi'_k(t)\right|\,|s|\,+\,\frac{1}{2}\,M''_k\,|s|^2\,\right)~.\nonumber
\end{eqnarray}
The other bound is analogous.
\end{proof}

The next estimate shows that, for $f$and $\psi$ as given in the statement of Theorem \ref{mainthm}, the
wavelet transform $W_f(a,b)$ is concentrated near the regions where, for some $k \in \{1,2,\ldots,K\}$, $~a\,\!\phi'_k(b)~$ is close to 1. 

\begin{est}
\[
\left|\,W_f(a,b)\,-\,\sqrt{2 \pi}\,\sum_{k=1}^K\,A_k(b)\,e^{i\phi_k(b)}\,\sqrt{a}\,\widehat{\psi}\left(a\,\phi'_k(b)\right)\,\right|\,\leq \,\epsilon\,a^{3/2}\,\Gamma_1 ~,
\]
where
\[
\Gamma_1\,:=\, I_1 \sum_{k=1}^K\,|\phi_k'(b)|
\,+\, \frac{1}{2}\,I_2\,a\,\sum_{k=1}^K\,\left[\,M''_k\,+\, |A_k(b)|\,|\phi_k'(b)|\, \right]
\,+\, \frac{1}{6}\,I_3\,a^2\,\sum_{k=1}^K\,M''_k\,|A_k(b)|~,
\]
with $I_n\,:=\,\int\,|u|^n \, |\psi(u)|\,du~$.
\label{est2}
\end{est}

\begin{proof}
We have 
\begin{eqnarray}
W_f(a,b)\,&=&\,\sum_{k=1}^K\, \int \, A_k(t) \, e^{i\phi_k(t)}\, 
a^{-1/2} \, \psi\left(\frac{t-b}{a}\right)\, dt\nonumber\\
&=&\, \sum_{k=1}^K\,A_k(b)\,\int\,e^{i[\phi_k(b)
\,+\,\phi_k'(b)\,(t-b)\,+\,\int_0^{t-b}[\phi_k'(b+u)-\phi_k'(b)]du]}\, 
a^{-1/2} \, \psi\left(\frac{t-b}{a}\right)\, dt\nonumber\\
&&~~~~~~~~~\,+\,\sum_{k=1}^K\,[A_k(t)\,-\,A_k(b)]\, e^{i\phi_k(t)}\, 
a^{-1/2} \, \psi\left(\frac{t-b}{a}\right)\, dt~.
\nonumber
\end{eqnarray}
It follows that
\begin{eqnarray}
&&\left|\,W_f(a,b)\,-\,\sqrt{2 \pi}\,\sum_{k=1}^K\,A_k(b)\,e^{i\phi_k(b)}\,\sqrt{a}\,\widehat{\psi}\left(a\,\phi'_k(b)\right)\,\right|\,\nonumber\\
&&\quad\leq \,\sum_{k=1}^K\,\int\,\left| A_k(t)\,-\,A_k(b)\right|\,a^{-1/2} \, \left|\psi\left(\frac{t-b}{a}\right)\right|\, dt\nonumber\\
&&\quad\quad\quad\quad\,+\, \sum_{k=1}^K\,|A_k(b)|\,\int\,\left|e^{i \int_0^{t-b}[\phi'_k(b+u)-\phi'_k(b)]du}\,-\,1\right|\,a^{-1/2} \, \left|\psi\left(\frac{t-b}{a}\right)\right|\, dt\nonumber\\
&&\quad\leq \,\sum_{k=1}^K\,\int\,\epsilon\,|t-b|\left(|\phi'_k(b)|+\frac{1}{2}\,M_k''\,|t-b|\right)\,a^{-1/2} \, \left|\psi\left(\frac{t-b}{a}\right)\right|\, dt\nonumber\\
&&\quad\quad\quad\quad\,+\, \sum_{k=1}^K\,|A_k(b)|\,\int\,\left|\int_0^{t-b}[\phi'_k(b+u)-\phi'_k(b)]du\right|\,a^{-1/2} \, \left|\psi\left(\frac{t-b}{a}\right)\right|\, dt\nonumber
\end{eqnarray}
\begin{eqnarray}
&&\quad \leq \,\epsilon\sum_{k=1}^K\,\left[\,a^{3/2}\,|\phi_k'(b)|\,\int\,|u|\,|\psi(u)|\,du
\,+\,a^{5/2}\,\frac{1}{2}\,M''_k\,\int\,|u|^2\,|\psi(u)|\,du\,\right]\nonumber\\
&&\quad\quad\quad\quad\,+\, \sum_{k=1}^K\,|A_k(b)|\,\epsilon\,\int\,\left[\,\frac{1}{2}\,|t-b|^2\,|\phi'_k(b)|\,+\,\frac{1}{6}\,|t-b|^3\,M''_k \right]\, a^{-1/2} \, \left|\psi\left(\frac{t-b}{a}\right)\right|\, dt
\nonumber\\
&&\quad \leq \epsilon\,a^{3/2}\,\left\{ I_1 \sum_{k=1}^K\,|\phi_k'(b)|
\,+\, \frac{1}{2}\,I_2\,a\,\sum_{k=1}^K\,\left[\,M''_k\,+\, |A_k(b)|\,|\phi_k'(b)|\, \right]
\,+\, \frac{1}{6}\,I_3\,a^2\,\sum_{k=1}^K\,M''_k\,|A_k(b)|\,\right\} \nonumber
\end{eqnarray}
\end{proof}

The wavelet $\psi$ satisfies $\widehat{\psi}(\xi)\neq 0$ only for $1-\Delta <\xi<1+\Delta$; it follows that $|W_f(a,b)|\,\leq\, \epsilon\,a^{3/2}\,\Gamma_1$ whenever
$|\,a\,\phi'_k(b)\,-\,1\,| \,>\,\Delta$ for all $k\in\{1,\ldots,K\}$. 
On the other hand, we also have the following lemma:

\begin{lemma} For any pair $(a,b)$ under consideration, there can be at most one $k \in \{1,\ldots,K\}$ for which $|\,a\,\phi'_k(b)\,-\,1\,| \,<\,\Delta$.
\label{zone_lemma}
\end{lemma}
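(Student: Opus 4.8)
The plan is to argue by contradiction, exploiting the frequency-separation hypothesis together with the smallness of $\Delta$. Suppose that for some fixed pair $(a,b)$ there exist two distinct indices $j<k$ with $|a\,\phi'_j(b)-1|<\Delta$ and $|a\,\phi'_k(b)-1|<\Delta$. Since the components are ordered so that $\phi'_1(b)<\phi'_2(b)<\cdots<\phi'_K(b)$ and $a>0$, the numbers $a\,\phi'_m(b)$ are strictly increasing in $m$; as both $a\,\phi'_j(b)$ and $a\,\phi'_k(b)$ lie in the open interval $(1-\Delta,1+\Delta)$, so does $a\,\phi'_m(b)$ for every $m$ with $j\le m\le k$. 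In particular $a\,\phi'_j(b)$ and $a\,\phi'_{j+1}(b)$ both lie in $(1-\Delta,1+\Delta)$, so it suffices to derive a contradiction for a pair of \emph{consecutive} indices; relabel these as $k-1$ and $k$.

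For such a consecutive pair, first I would bound the gap from above: since both $a\,\phi'_{k-1}(b)$ and $a\,\phi'_k(b)$ lie in an interval of length $2\Delta$, we have
\[
a\bigl[\phi'_k(b)-\phi'_{k-1}(b)\bigr]<2\Delta .
\]
Next I would bound the same gap from below using the separation hypothesis $\phi'_k(b)-\phi'_{k-1}(b)\ge d\,[\phi'_k(b)+\phi'_{k-1}(b)]$; multiplying by $a>0$ and using that each of $a\,\phi'_k(b)$ and $a\,\phi'_{k-1}(b)$ exceeds $1-\Delta$ yields
\[
a\bigl[\phi'_k(b)-\phi'_{k-1}(b)\bigr]\ge d\,a\bigl[\phi'_k(b)+\phi'_{k-1}(b)\bigr]>2d\,(1-\Delta).
\]
Combining the two displays gives $2\Delta>2d(1-\Delta)$, i.e. $\Delta(1+d)>d$, or equivalently $\Delta>d/(1+d)$. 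This contradicts the standing assumption $\Delta<d/(1+d)$ from the statement of \thmref{mainthm}, and the lemma follows.

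There is really no hard step here: once the contradiction is set up, the conclusion is a one-line manipulation of the two elementary inequalities above. The only point that deserves care is the reduction from an arbitrary pair of indices to a consecutive pair, which is exactly where the monotonicity $\phi'_k(b)>\phi'_{k-1}(b)$ is used (rather than merely the separation inequality, which is stated only for consecutive indices). It is also worth noting that the hypothesis $\Delta<d/(1+d)$ is precisely the threshold making the bands $a\,\phi'_{k-1}(b)\approx 1$ and $a\,\phi'_k(b)\approx 1$ disjoint, so the lemma is sharp in that sense.
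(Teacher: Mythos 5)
Your proof is correct and follows essentially the same route as the paper's: both bound the gap $\phi'_k(b)-\phi'_{k-1}(b)$ above by $2\Delta/a$ (from the band condition) and below by $2d(1-\Delta)/a$ (from the separation hypothesis), yielding $\Delta \geq d(1-\Delta)$, which contradicts $\Delta < d/(1+d)$. The only cosmetic difference is that you reduce to a consecutive pair inside the band via monotonicity, whereas the paper handles an arbitrary pair $k>\ell$ directly by chaining $\phi'_k(b)-\phi'_{\ell}(b)\geq \phi'_k(b)-\phi'_{k-1}(b)\geq d[\phi'_k(b)+\phi'_{k-1}(b)]\geq d[\phi'_k(b)+\phi'_{\ell}(b)]$; both uses of monotonicity are equivalent in substance.
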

\begin{proof}
Suppose that $k,\,\ell \in \{1,\ldots,K\}$ both satisfy the condition, i.e. that
$|\,a\,\phi'_k(b)\,-\,1\,| \,<\,\Delta$ and $|\,a\,\phi'_{\ell}(b)\,-\,1\,| \,<\,\Delta$, with
$k \neq \ell$. For the sake of definiteness, assume $k > \ell$. Since 
$f \in \mathcal{A}_{\epsilon,d}$, we have
\[
\phi'_k(b)-\phi'_{\ell}(b)\, \geq\,\phi'_k(b)-\phi'_{k-1}(b)\, \geq\,\
d\,[\phi'_k(b)+\phi'_{k-1}(b)]\, \geq\,\ d\,[\phi'_k(b)+\phi'_{\ell}(b)]~.
\]
Combined with
\begin{equation*}
\begin{aligned}
  \phi'_k(b)-\phi'_{\ell}(b)\, &\leq \,a^{-1}\,[\,(1+\Delta)\,-\,(1-\Delta)\,] \,=\,2\,a^{-1}\,\Delta~, \\
  \phi'_k(b)+\phi'_{\ell}(b)\, &\geq \,a^{-1}\,[\,(1-\Delta)\,-\,(1-\Delta)\,] \,=\,2\,a^{-1}\,(1-\Delta)~,
\end{aligned}
\end{equation*}
this gives
\[
\,\Delta \,\geq \,d\,(1-\Delta)~,
\]
which contradicts the condition $\Delta\,<\,d/(1+d)$ from Theorem \ref{mainthm}.
\end{proof}

It follows that the $a,b$-plane contains $K$ non-touching ``zones'',
corresponding to $|\,a\,\phi'_k(b)\,-\,1\,| \,<\,\Delta$, $k \in \{1,\ldots,K\}$, separated
by a ``no-man's land'' where $|W_f(a,b)|$ is small. We shall assume (see below) that 
$\epsilon$ is sufficiently small, i.e., that for all $(a,b)$ under consideration, 
\begin{equation}
\epsilon\,<\,a^{-9/4}\,\Gamma_1^{-3/2}~,
\label{first_cond_eps}
\end{equation}
so that  $\epsilon\,a^{3/2}\,\Gamma_1\,<\,\epsilon^{1/3}\,=\,\weps$. 
The upper bound in the intermediate region between the $K$ special zones 
is then below the threshold
allowed $\weps$ for the computation of $\omega_f(a,b)$ used in $S_{f,\weps}$ (see
the formulation of Theorem \ref{mainthm}). 
It follows that we
will compute $\omega_f(a,b)$ only in the special zones themselves. We thus need to estimate $\partial_bW_f(a,b)$ in each of these zones.

\begin{est}
For $k \in \{1,\ldots,K\}$, and $(a,b) \in \RR_+ \times \RR$ such that $|\,a\,\phi'_k(b)\,-\,1\,| \,<\,\Delta$, we have
\[
\left|\,-i \,\partial_b\,W_f(a,b)\,-\,\sqrt{2 \pi}\,A_k(b)\,e^{i\phi_k(b)}\,\sqrt{a}\,
\phi'_k(b)\,\widehat{\psi}\left(a\,\phi'_k(b)\right)\,\right|\,\leq \,\epsilon\,a^{1/2}\,\Gamma_2 ~,
\]
where
\[
\Gamma_2\,:=\, I'_1 \sum_{k=1}^K\,|\phi_k'(b)|
\,+\, \frac{1}{2}\,I'_2\,a\,\sum_{k=1}^K\,\left[\,M''_k\,+\, |A_k(b)|\,|\phi_k'(b)|\, \right]
\,+\, \frac{1}{6}\,I'_3\,a^2\,\sum_{k=1}^K\,M''_k\,|A_k(b)|~,
\]
with $I'_n\,:=\,\int\,|u|^n \, |\psi'(u)|\,du~$.
\label{est4}
\end{est}

\begin{proof}
The proof follows the same lines as that for Estimate \ref{est2}. 
We have
\begin{equation}
\begin{aligned}
  \partial_b \, W_f(a,b)\,&= \,\partial_b \,\left(\,\sum_{\ell=1}^K\,
    \int \, A_\ell(t) \, e^{i\phi_\ell(t)}\,
    a^{-1/2} \, \psi\left(\frac{t-b}{a}\right)\, dt\,\right) \\
  &=\,-\,a^{-3/2} \,\sum_{\ell=1}^K\, \int \, A_\ell(t) \,
  e^{i\phi_\ell(t)}\,
  \,\psi'\left(\frac{t-b}{a}\right)\, dt\, \\
  &=\,-\, \sum_{\ell=1}^K\,A_\ell(b)\,\int\,e^{i[\phi_\ell(b)
    \,+\,\phi_\ell'(b)\,(t-b)\,+\,\int_0^{t-b}[\phi_\ell'(b+u)
    -\phi_\ell'(b)]du]}\,
  a^{-3/2} \, \psi'\left(\frac{t-b}{a}\right)\, dt \\
  &\quad\quad\quad\,-\,\sum_{\ell=1}^K\,[A_\ell(t)\,-\,A_\ell(b)]\,
  e^{i\phi_\ell(t)}\, a^{-3/2} \, \psi'\left(\frac{t-b}{a}\right)\,
  dt~.  \nonumber
\end{aligned}
\end{equation}
By Lemma \ref{zone_lemma}, only the term for $\ell =k$
survives in the sum for $(a,b)$ such that $|\,a\,\phi'_k(b)\,-\,1\,| \,<\,\Delta$,
and we obtain
\begin{equation*}
\begin{aligned}
  &\,\left|\partial_b\,W_f(a,b)\,-\,i \,\sqrt{2
      \pi}\,A_k(b)\,e^{i\phi_k(b)}\,\sqrt{a}\,
    \phi'_k(b)\widehat{\psi}\left(a\,\phi'_k(b)\right)\,\right| \\
  &\quad\,=\,\left|\partial_b\,W_f(a,b)\,-\,\sqrt{2 \pi}\,A_k(b)\,e^{i\phi_k(b)}\,\frac{1}{\sqrt{a}}\,\widehat{\psi'}\left(a\,\phi'_k(b)\right)\,\right|\,\\
  &\quad\leq \,\int\,\epsilon\,|t-b|\left(|\phi'_k(b)|+\frac{1}{2}\,M''\,|t-b|\right)\,a^{-3/2} \, \left|\psi'\left(\frac{t-b}{a}\right)\right|\, dt \\
  &\quad\quad\quad\quad\,+\, |A_k(b)|\,\int\,\left|e^{i \int_0^{t-b}[\phi'_k(b+u)-\phi'_k(b)]du}\,-\,1\right|\,a^{-3/2} \, \left|\psi'\left(\frac{t-b}{a}\right)\right|\, dt \\
  &\quad \leq
  \,\epsilon\,\left[\,a^{1/2}\,|\phi_k'(b)|\,\int\,|u|\,|\psi'(u)|\,du
    \,+\,a^{3/2}\,\frac{1}{2}\,M''_k\,\int\,|u|^2\,|\psi'(u)|\,du\,\right]\\
  &\quad\quad\quad\quad\,+\, |A_k(b)|\,\epsilon\,\int\,\left[\,\frac{1}{2}\,|t-b|^2\,|\phi'_k(b)|\,+\,\frac{1}{6}\,|t-b|^3\,M''_k \right]\, a^{-3/2} \, \left|\psi'\left(\frac{t-b}{a}\right)\right|\, dt \\
  &\quad \leq \epsilon\,a^{1/2}\,\left\{ I'_1 \,|\phi_k'(b)| \,+\,
    \frac{1}{2}\,I'_2\,a\,\left[\,M''_k\,+\, |A_k(b)|\,|\phi_k'(b)|\,
    \right] \,+\, \frac{1}{6}\,I'_3\,a^2\,M''_k\,|A_k(b)|\,\right\}
\end{aligned}
\end{equation*}
\end{proof}
Combining Estimates \ref{est2} and \ref{est4}, we find 
\begin{est}
Suppose that {\rm(\ref{first_cond_eps})} is satisfied. 
For $k \in \{1,\ldots,K\}$, and $(a,b) \in \RR_+ \times \RR$ such that both $|\,a\,\phi'_k(b)\,-\,1\,| \,<\,\Delta$ and $W_f(a,b) \geq \weps$ are satisfied, we have
\[
\left| \, \omega_f(a,b) \,-\, \phi'_k(b)\, \right| \,\leq\, \sqrt{a}\,\left(\,\Gamma_2\,+\,a\,\Gamma_1\,\phi'_k(b)\,\right)\, \epsilon^{2/3}~.
\]
\label{est5}
\end{est}
\begin{proof}
By definition, 
\[
\omega_f(a,b)\,=\,\frac{-i \partial_bW_f(a,b)}{W_f(a,b)}~.
\]
For convenience, let us, for this proof only, denote $\sqrt{2 \pi}\,A_k(b)\,e^{i\phi_k(b)}\,\sqrt{a}\,\widehat{\psi}\left(a\,\phi'_k(b)\right)$ by B.
For the $(a,b)$-pairs under consideration, we have then
\begin{equation*}
|\,-i \,\partial_bW_f(a,b)\,-\,
\phi'_k(b)\,B\,| \,\leq\, \epsilon \,a^{1/2}\, \Gamma_2
\quad \mbox{and} \quad
|W_f(a,b)-\,B \,|\, \leq \,\epsilon\, a^{3/2}\, \Gamma_1~.
\end{equation*}
Using $\weps\,=\,\epsilon^{1/3}$, it follows that 
\[
\omega_f(a,b)\,-\,\phi'_k(b)\,=\,\frac{-i\partial_bW_f(a,b)\,-\,\phi'_k(b)\,B}{W_f(a,b)}\,+\,\frac{[B\,-\,W_f(a,b)]\,\phi'_k(b)}{W_f(a,b)}~,
\]
so that
\[
\left|\,\omega_f(a,b)\,-\,\phi'_k(b)\,\right| \,\leq \,\frac{\epsilon\,a^{1/2}\,\Gamma_2
\,+\,\epsilon\, a^{3/2}\, \phi'_k(b)\,\Gamma_1}{W_f(a,b)}
\,\leq\, \sqrt{a}\,\left(\,\Gamma_2\,+\,a\,\Gamma_1\,\phi'_k(b)\,\right)\, \epsilon^{2/3}~.
\]
\end{proof}

If (see below) we impose an extra restriction on $\epsilon$, namely that, for all
$(a,b)$ under consideration, and all $k \in \{1,\ldots,K\}\,$,
\begin{equation}
\epsilon \,\leq \, a^{-3/2}\, \left[ \,\Gamma_2 \,+\, a\, \phi'_k(b)\,\Gamma_1 \,\right]^{-3}~,
\label{second_cond_eps}
\end{equation}
then this last estimate can be simplified to
\begin{equation}
\left|\,\omega_f(a,b)\,-\,\phi'_k(b)\,\right| \,\leq \, \weps~.
\label{omega_ineq}
\end{equation}
Next is our final estimate:

\begin{est}
Suppose that both {\rm(\ref{first_cond_eps})} and {\rm(\ref{second_cond_eps})}
are satisfied, and that, in addition, for all $b$ under consideration,
\begin{equation}
\epsilon \,\leq\,1/8 \, d^3 \, [\, \phi'_1(b)\,+\,\phi'_2(b)\,]^3~.
\label{third_cond_eps}
\end{equation} 
Let $S_{f,\weps}$ be the synchrosqueezed wavelet transform of $f$,
\[
S_{f,\weps}(b,\omega)\,:=\,\int_{A_{\weps,f}(b)}\, W_f(a,b) \, 
\delta(\omega - \omega_f(a,b))\,a^{-3/2}\,da ~.
\]
Then we have, for all $b \in \RR$, and all $k \in \{1,\ldots,K\}$
\[
\left|\,\mathcal{R}_{\psi}^{-1} \int_{|\omega-\phi'_k(b)|<\weps}\,S_{f,\sigma}(b,\omega)\,d\omega
\,-\,A_k(b)\,e^{i \phi_k(b)}\,\right|\,\leq\,  C \weps~.
\]\label{est6}
\end{est}
\begin{proof}
For later use, note first that (\ref{third_cond_eps}) implies that,
for all $k,\,\ell \in \{1,\ldots,K\}$,
\begin{equation}
d \, [\, \phi'_k(b)\,+\,\phi'_{\ell}(b)\,] \,>\, 2 \, \weps~.
\label{auxiliary}
\end{equation}
We have 
\begin{equation*}
\begin{aligned}
  \int_{|\omega-\phi'_k(b)|<\weps}\,S_{f,\sigma}(b,\omega)\,d\omega
  \,&=\, \int_{|\omega-\phi'_k(b)|<\weps}\, \int_{A_{\weps, f}(b)} \,
  W_f(a,b)\,\delta(\omega\,-\,\omega_f(a,b))\,a^{-3/2}\, da   \,d\omega \\
  \,&=\,\int_{A_{\weps, f}(b) \cap \{ |\omega_f(a,b) - \phi_k'(b)
    |<\weps \}} \, W_f(a,b)\, a^{-3/2} \,da ~.
\end{aligned}
\end{equation*}
From Estimate \ref{est2} and (\ref{first_cond_eps}) we know that
$|W_f(a,b)|> \weps$ only when $|a\phi_\ell'(b) - 1 | < \Delta$ for
some $\ell \in \{1,\ldots,K\}$.  For $\ell \neq k$, we have (use
(\ref{auxiliary}))
\begin{equation*}
\begin{aligned}
|\omega_f(a,b)-\phi'_{\ell}(b)| \,&\geq\,|\phi'_{\ell}(b)-\phi'_k(b)|\,-\, 
|\omega_f(a,b)-\phi'_k(b)| \\
&\geq\, d\, [\,\phi'_{\ell}(b)+\phi'_k(b) \,]\,-\,\weps\,>\,\weps~,
\end{aligned}
\end{equation*} 
which, by Estimate \ref{est5}, implies that $|a\phi_{\ell}'(b) -
1|\geq\Delta$. Hence
\begin{equation*}
\begin{aligned}
  \int_{|\omega-\phi'_k(b)|<\weps}\,S_{f,\sigma}(b,\omega)\,d\omega
  \,&=\,\int_{A_{\weps,f}(b) \cap \{|a\phi_k'(b) -1|<\Delta \}}
  \,  W_f(a,t)\, a^{-3/2} \,da\\
  &=\, \left(\int_{|a\phi_k'(b) - 1|<\Delta} \,W_f(a,b)\, a^{-3/2}
    \,da \right)\, \\
  &\quad\quad\quad\quad-\, \left( \int_{ \{|a\phi_k'(b) -
      1|<\Delta\}\backslash A_{\weps,f}(b)}\,W_f(a,b)\, a^{-3/2}
    \,da\,\right)~.\nonumber
\end{aligned}
\end{equation*}
From Estimate \ref{est2} we then obtain
\begin{equation*}
\begin{aligned}
  &\left|\,\mathcal{R}_{\psi}^{-1}\,\int_{|\omega-\phi'_k(b)|<\weps}\,
    S_{f,\sigma}(b,\omega)\,d\omega \,-\,\,A_k(b)\,e^{i \phi_k(b)}\,\right|\,\\
  &\quad\quad\leq
  \,\left|\,\mathcal{R}_{\psi}^{-1}\,\left(\int_{|a\phi_k'(b)
        -1|<\Delta} \,W_f(a,b)\, a^{-3/2} \,da \right)\,-\,
    A_k(b)\,e^{i \phi_k(b)}\, \right| \, \\
  &\quad\quad\quad\quad\quad\quad +\,\mathcal{R}_{\psi}^{-1}\,
  \left|\int_{ \{|a\phi_k'(b) - 1|<\Delta\}\backslash A_{\weps,f}(b)}
    \,W_f(a,b)\, a^{-3/2} \,da \right| \\
  &\quad\quad\leq\,\left|\,\mathcal{R}_{\psi}^{-1}\,\sqrt{2 \pi}
    \,A_k(b)\,e^{i\phi_k(b)}\,\left(\int_{|a \phi'_k(b)-1|<\Delta)} \,
      \sqrt{a}\,\widehat{\psi}(a \phi'_k(b))\,a^{-3/2}
      \,da\,\right)\,-\, A_k(b)\,e^{i \phi_k(b)}\,\right|\\
  &\quad\quad\quad\quad\quad\quad +\,\mathcal{R}_{\psi}^{-1}\,\int_{|a
    \phi'_k(b)-1|<\Delta)}\,\left[\,\weps \,+\,\weps
    \,a^{-3/2}\,\right]\,da~.
\end{aligned}
\end{equation*}
For the first term on the right hand side, since
\begin{multline*}
\mathcal{R}_{\psi}^{-1}\,\sqrt{2 \pi}\,A_k(b)\,e^{i\phi_k(b)}\,\int_{|a \phi'_k(b)-1|<\Delta} \,\widehat{\psi}(a\phi'_k(b))\, a^{-1}\,da \\
=\,\mathcal{R}_{\psi}^{-1}\,\sqrt{2 \pi}\,A_k(b)\,e^{i\phi_k(b)}\,\int_{|\zeta -1|<\Delta}\,\widehat{\psi}(\zeta)\, \zeta^{-1}\,d\zeta\ \,=\,A_k(b)\,e^{i\phi_k(b)}~,
\end{multline*}
by the definition of $\mathcal{R}_{\psi}$, and hence the first term
vanishes. We thus obtain
\[ 
\left|\,\mathcal{R}_{\psi}^{-1}\,\int_{|\omega-\phi'_k(b)|<\weps}\,S_{f,\sigma}(t,\omega)\,d\omega \,-\,\,A_k(b)\,e^{i \phi_k(b)}\,\right|\,\leq \,2\,\weps\,\mathcal{R}_{\psi}^{-1}\,\left[\,\frac{\Delta}{\phi'_k(b)}
\,+\,  \left(\frac{\phi'_k(b)}{1-\Delta}\right)^{1/2} \,-\, \left(\frac{\phi'_k(b)}{1+\Delta}\right)^{1/2} \right]
\]
\end{proof}

It is now easy to see that all the Estimates together provide a
complete proof for Theorem \ref{mainthm}.

\begin{remark}
  We have three different conditions on $\epsilon$, namely
  (\ref{first_cond_eps}), (\ref{second_cond_eps}) and
  (\ref{third_cond_eps}). The auxiliary quantities in these
  inequalities depend on $a$ and $b$, and the conditions should be
  satisfied for all $(a,b)$-pairs under consideration. This is not
  really a problem: the dependence on $b$ is via the quantities
  $A_k(b)$ and $\phi'_k(b)$, and it is reasonable to assume these are
  uniformly bounded above and below (away from zero); because the
  bounds on the $\phi'_k(b)$ translate into bounds on $a$, we can
  likewise safely assume that $a$ is bounded above as well as below
  (away from zero). Note that the different terms can be traded off in
  many other ways than what is done here; no effort has been made to
  optimize the bounds, and they can surely be improved. The focus here
  was not on optimizing the constants, but on proving that, if the
  rate of change (in time) of the $A_k(b)$ and the $\phi'_k(b)$ is
  small, compared with the rate of change of the $\phi_k(b)$
  themselves, then synchrosqueezing will identify both the
  ``instantaneous
  frequencies'' and their amplitudes.\\
  In the statement of Theorem \ref{mainthm}, we required the wavelet
  $\psi$ to have a compactly supported Fourier transform. This is not
  absolutely necessary; it was done here for convenience in the proof.
  If $\widehat{\psi}$ is not compactly supported, then extra terms
  occur in many of the estimates, taking into account the decay of
  $\widehat{\psi}(\zeta)$ as $\zeta \rightarrow \infty$ or $\zeta
  \rightarrow 0$; these can be handled in ways similar to what we saw
  above, at the cost of significantly lengthening the computations
  without making a conceptual difference.
\end{remark}

\section{A Variational Approach}
\label{variational}
The construction and estimates in the previous section can also be
interpreted in a variational framework.

Let us go back to the notion of ``instantaneous frequency.'' 
Consider a signal $s(t)$ that is a
sum of IMT components $s_i(t)$:
\begin{equation}
  s(t) = \sum_{i=1}^N s_i(t)= \sum_{i=1}^N A_i(t) \,\cos(\phi_i(t)),
\end{equation}
with the additional constraints that $\phi_i'(t)$ and $\phi_j'(t)$ for
$i\not= j$ are ``well separated'', so that it is reasonable to
consider the $s_i$ as individual components. According to the
philosophy of EMD, the instantaneous frequency at time $t$, for the
$i$-th component, is then given by $\omega_i(t) = \phi_i'(t)$.

How could we use this to build a time-frequency representation for
$s$?  If we restrict ourselves to a small window in time around $T$,
of the type $[T-\Delta t,T+\Delta t]$, with $\Delta t\approx 2 \pi
/\phi_i'(T)$, then (by its IMT nature) the $i$-th component can be
written (approximately) as
\[
\left.s_i(t)_{\!_{\,}}\right|_{[T-\Delta t,T+\Delta t]}\approx A_i(T)\, \cos\left[\phi_i(T)+
\phi_i'(T)\,(t-T) \right],
\]
which is essentially a truncated Taylor expansion in which terms of size $O(A'_i(T))$, $O(\phi''_i(T))$
have been neglected. 
Introducing $\omega_i(T)= \phi_i'(T)$, and the phase 
$\varphi_i(T):=\phi_i(T)-\omega_i(T)T $
we can rewrite this as 
\begin{equation*}
\left.s_i(t)_{\!_{\,}}\right|_{[T-\Delta t,T+\Delta t]}\approx A_i(T)\, \cos\left[\omega_i(T)t+\varphi_i(T) \right].
\end{equation*}  

This signal has a time-frequency representation, as a bivariate
``function'' of time and frequency, given by (for $t \in [T-\Delta
t,T+\Delta t]$)
\begin{equation}
  F_i(t, \omega) =  A_i(T)\,\cos[\omega t +\varphi_i(T)]\,\delta(\omega - \omega_i(T)),
\end{equation}
where $\delta$ is the Dirac-delta measure.  The time-frequency
representation for the full signal $s=\sum_{i=1}^N s_i$, still in the
neighborhood of $t=T$, would then be
\begin{equation}
F(t, \omega) = \sum_{i=1}^N A_i(T)\,\cos[\omega t +\varphi_i(T)]\,\delta(\omega - \omega_i(T)).
\end{equation}
Integrating over $\omega$, in the neighborhood of $t=T$, leads to
$s(t) \approx \int F(t, \omega) \ud \omega$.

All this becomes even simpler if we introduce the ``complex form'' of
the time-frequency representation: for $t$ near $T$, we have
$\widetilde{F}(t,\omega)= \sum_{j=1}^N
\widetilde{A}_j(T)\,\exp(i\omega t)\,\delta(\omega-\phi'_j(T))$, with
$\widetilde{A}_j(T)=A_j(T)\,\exp[i\varphi_j(T)]$; integration over
$\omega$ now leads to
\begin{equation}
\label{eq:complex_int}
\mathfrak{Re}\left[ \int \widetilde{F}(t, \omega) \ud \omega \right] = 
\mathfrak{Re}\left[ \sum_{j=1}^N  A_j(T)\,\exp[i\omega_j(T)t+ i\varphi_j(T)]\,  \right] \approx
s(t).
\end{equation}
Note that, because of the presence of the $\delta$-measure, and under
the assumption that the components remain separated, the
time-frequency function $F(t,\omega)$ satisfies the equation
\begin{equation}
\label{eq:partial_id}
\partial_t \widetilde{F}(t,\omega)=i\omega \widetilde{F}(t,\omega).
\end{equation}

To get a representation over a longer time-interval, the small pieces
described above have to be knitted together. One way of doing this is
to set $\widetilde{F}(t,\omega)= \sum_{j=1}^N
\widetilde{A}_j(t)\,\exp(i\omega t)\,\delta(\omega-\omega_j(t))$.
This more globally defined $\widetilde{F}(t,\omega)$ is still
supported on the $N$ curves given by $\omega=\omega_j(t)$,
corresponding to the instantaneous frequency ``profile'' of the
different components. The complex ``amplitudes'' $\widetilde{A}_j(t)$
are given by $\widetilde{A}_j(t)={A}_j(t) \exp[i\varphi_j(t)]$, where,
to determine the phases $\exp[i\varphi_j(t)]$, it suffices to know
them at one time $t_0$. We have indeed
\[
\frac{\ud \varphi_j(t)}{\ud t} = \frac{\ud}{\ud t}\, [\phi_j(t)-\omega_j(t)t  ]
= \phi'_j(t)-\omega_j(t)-\omega'_j(t)t=-\omega'_j(t)t;
\]
since the $\omega_j(t)$ are known (they are encoded in the support of $\widetilde{F}$),
we can compute the $\varphi_j(t)$ by using 
\[\varphi_j(t)=\int_{t_0}^t\omega'_j(\tau)\tau 
\ud \tau \,+\, \varphi_j(t_0).
\]
Moreover, (\ref{eq:partial_id}) still holds (in the sense of distributions) 
up to terms of size 
$\mbox{O} (A'_i(T)), \, \mbox{O} (\phi''_i(T))$, since 
\begin{eqnarray*}
  \partial_t \widetilde{F}(t,\omega)
  &=&\sum_{j=1}^N 
  \left\{ \left[ A'_j(t) - i \omega'_j(t)t A_j(t)+i\omega A_j(t)\right]\right.
  \left.\,e^{i\omega t}\,\delta(\omega-\omega_j(t)) + A_j(t)\,e^{i\omega t}\,\omega'_j(t)\,\delta'(\omega-\omega_j(t))\right\}\\
  &=& i \omega \sum_{j=1}^N A_j(t)\,e^{i\omega t}\,\delta(\omega-\omega_j(t))\,+\, \mbox{O} (A'_i(T),\,\phi''_i(T))\\
  &=& i \omega \, \widetilde{F}(t,\omega)\,+\, \mbox{O} (A'_i(T), \,\phi''_i(T)).
\end{eqnarray*}

%
%

This suggests modeling the adaptive time-frequency decomposition as a
variational problem in which one seeks to minimize
  \begin{equation}\label{functional1}
    \int \left| \mathfrak{Re}\left[\int F(t,\omega) d\omega\right]-s(t)\right|^2 \ud t+\mu\iint \left|\partial_t F(t,\omega)-i\omega F(t,\omega)\right|^2 \ud t \ud \omega
  \end{equation}
  to which extra terms could be added, such as, $\gamma \iint
  |F(t,\omega)|^2 \ud t \ud \omega$ (corresponding to the constraint
  that $F \in L^2(\mathbb{R}^2)$), or $\lambda \int \left[ [\int
    |F(t,\omega)| \ud \omega\,\right]^2 \ud t$ (corresponding to a
  sparsity constraint in $\omega$ for each value of $t$). Using
  estimates similar to those in Section \ref{main_result}, one can
  prove that if $s \in \mathcal{A}_{\epsilon, d}$, then its
  synchrosqueezed wavelet transform $S_{s,\weps}(b,\omega)$ is close
  to the minimizer of (\ref{functional1}). Because the estimates and
  techniques of proof are essentially the same as in Section
  \ref{main_result}, we don't give the details of this analysis here.

  Note that wavelets or wavelet transforms play no role in the
  variational functional -- this fits with our numerical observation
  that although the wavelet transform itself of $s$ is definitely
  influenced by the choice of $\psi$, the dependence on $\psi$ is
  (almost) completely removed when one considers the synchrosqueezed
  wavelet transform, at least for signals in $\mathcal{A}_{\epsilon,
    d}$.

\section{Numerical Results}
\label{numerical}
In this section we illustrate the effectiveness of synchrosqueezed
wavelet transforms on several examples.  For all the examples in this
Section, synchrosqueezing was carried out starting from a Morlet
Wavelet transform; other wavelets that are well localized in frequency
give similar results.

\subsection{Instantaneous Frequency Profiles for Synthesized data}

We start by revisiting the toy signal of Figures
\ref{fig_linear_tf_methods} and \ref{fig_quadratic_tf_methods} in the
Introduction. Figure \ref{toy_synchr_squ} shows the result of
synchrosqueezing the wavelet transform of this toy signal.
\begin{figure}[ht]
\begin{center}
\begin{minipage}{.3\textwidth}
\begin{center}
\includegraphics[width=.9 \textwidth,height=1.3 in]{signal.png}
\end{center}
\end{minipage}
\hspace*{.03\textwidth}
\begin{minipage}{.3\textwidth}
\begin{center}
\includegraphics[ width=.9 \textwidth,height=1.3 in]{instfreq.png}
\end{center}
\end{minipage}
\hspace*{.03\textwidth}
\begin{minipage}{.3\textwidth}
\begin{center}
\includegraphics[width=.9 \textwidth,height=1.3 in]{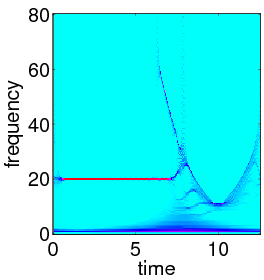}
\end{center}
\end{minipage}
\end{center}
\caption{\label{toy_synchr_squ}{\bf Revisiting the toy example from
    the Introduction.}  Left: the toy signal used for Figures
  \ref{fig_linear_tf_methods} and \ref{fig_quadratic_tf_methods};
  Middle: its instantaneous frequency; Right: the result of
  synchrosqueezing for this signal. The ``extra'' component at very
  low frequency is due to the signal's not being centered around 0.  }
\end{figure}
We next explore the tolerance to noise of synchrosqueezed wavelet
transforms.  We denote by $X(t)$ a white noise with zero mean and
variance $\sigma^2=1$. The {\em Signal-to-Noise Ratio (SNR)} (measured
in dB), will be defined (as usual) by
\begin{equation*}
  \text{SNR [dB]} = 10 \log_{10} \left( \frac{\operatorname{Var}f}{\sigma^2} 
  \right ),
\end{equation*}
where $f$ is the noiseless signal.
\begin{figure}[ht]
\centering
\begin{minipage}{0.25\textwidth}
\begin{center}
  \includegraphics[width=0.9\textwidth, height=1.1
  in]{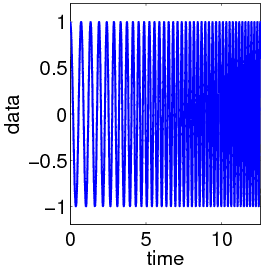}
\end{center}
\end{minipage}
\begin{minipage}{0.25\textwidth}
\begin{center}
  \includegraphics[width=0.9\textwidth, height=1.1
  in]{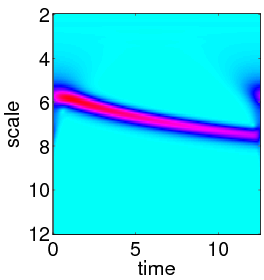}
\end{center}
\end{minipage}
\begin{minipage}{0.25\textwidth}
\begin{center}
  \includegraphics[width=0.9\textwidth, height=1.1
  in]{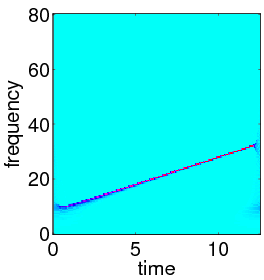}
\end{center}
\end{minipage}
  \label{fig:singlechirp:srwt-nonoise}
\begin{minipage}{0.25\textwidth}
\begin{center}
  \includegraphics[width=0.9\textwidth, height=1.1
  in]{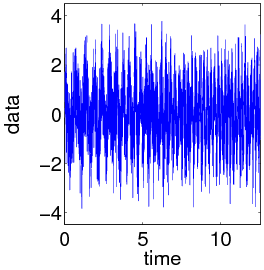}
\end{center}
\end{minipage}
\begin{minipage}{0.25\textwidth}
\begin{center}
  \includegraphics[width=0.9\textwidth, height=1.1
  in]{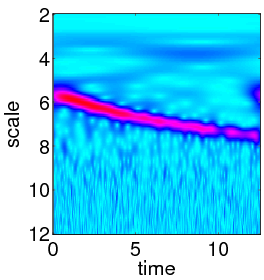}
\end{center}
\end{minipage}
\begin{minipage}{0.25\textwidth}
\begin{center}
  \includegraphics[width=0.9\textwidth, height=1.1
  in]{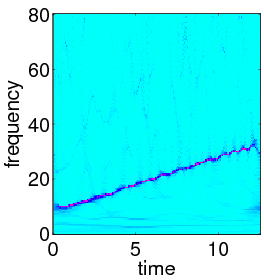}
\end{center}
\end{minipage}
\begin{minipage}{0.25\textwidth}
\begin{center}
  \includegraphics[width=0.9\textwidth, height=1.1
  in]{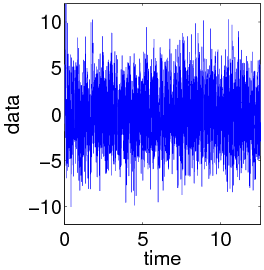}
\end{center}
\end{minipage}
\begin{minipage}{0.25\textwidth}
\begin{center}
  \includegraphics[width=0.9\textwidth, height=1.1
  in]{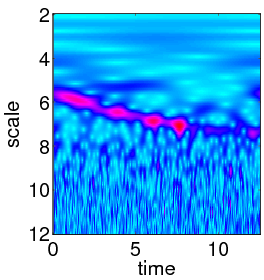}
\end{center}
\end{minipage}
\begin{minipage}{0.25\textwidth}
\begin{center}
  \includegraphics[width=0.9\textwidth, height=1.1
  in]{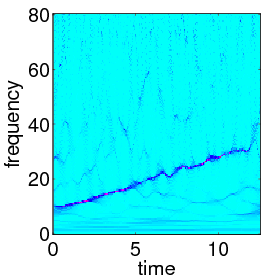}
\end{center}
\end{minipage}
\caption{\label{fig:singlechirp}Top row: left: single chirp signal
  without noise; middle: its continuous wavelet transform, and right:
  the synchrosqueezed transform.  Middle row: same, after white noise
  with SNR of $-3.00$ dB was added to the chirp signal.Lower row: same,
  now with white noise with SNR of $-12.55$ dB.}
\vspace*{.1 in}

%
\centering
\begin{minipage}{0.19\textwidth}
\hspace*{-0.004 \textwidth}
\includegraphics[width=0.99\textwidth, height=1.1 in]{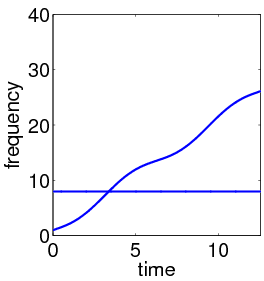}
\end{minipage}
\hspace*{0.002 \textwidth}
\begin{minipage}{0.19\textwidth}
\includegraphics[width=0.99\textwidth, height=1.07 in]{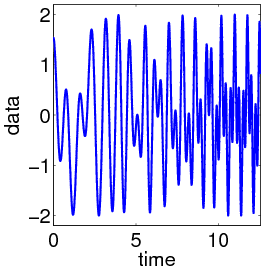}
\end{minipage}
\hspace*{-0.004 \textwidth}
\begin{minipage}{0.19\textwidth}
\includegraphics[width=0.99\textwidth, height=1.1in]{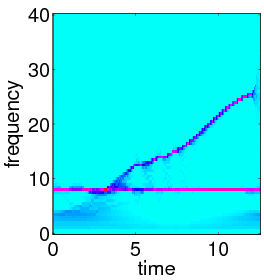}
\end{minipage}
\hspace*{0.002 \textwidth}
\begin{minipage}{0.19\textwidth}
\includegraphics[width=0.99\textwidth, height=1.1 in]{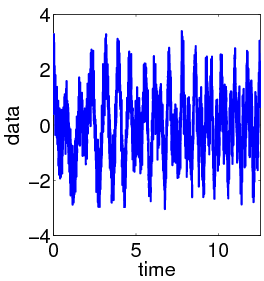}
\end{minipage}
\hspace*{-0.004 \textwidth}
\begin{minipage}{0.19\textwidth}
\includegraphics[width=0.99\textwidth, height=1.1 in]{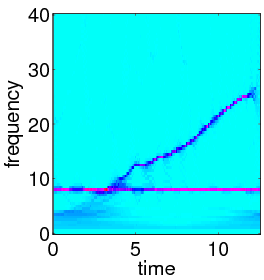}
\end{minipage}
\caption{\label{fig:crossover} {\em Far left:} the instantaneous
  frequencies, $\omega(t)=2t+1-\sin(t)$ and $8$, of the the two IMT components
  of the crossover signal $f(t) = \cos[t^2 + t +\cos(t)] + \cos(8t)$;
  {\em Middle left:} plot of $f(t)$ with no noise added;  {\em Middle:}
  Synchrosqueezed wavelet transforms of noiseless $f(t)$; {\em Middle right:}
  $f(t)+$noise (corresponding to SNR of $6.45$ dB); {\em Far right:} 
  synchrosqueezed wavelet transforms of 
  $f(t)+$noise.}
\end{figure}
Figure~\ref{fig:singlechirp} shows the results of applying our
algorithm to a signal consisting of one single chirp function $f(t) =
\cos(8t+t^2)$, without noise (i.e. the signal is just $f$), with some
noise (the signal is $f+X$, SNR$=-3.00$ dB), and with more
noise($f+3X$, SNR$=-12.55$dB).  Despite the high noise levels, the
synchrosqueezing algorithm identifies the component with reasonable
accuracy. Figure \ref{fig:singlechirp:instfreq} below shows the
instantaneous frequency curve extracted from these synchrosqueezed
transforms, for the three cases.

Finally we try out a ``crossover signal'', that is, a signal composed
of two components with instantaneous frequency trajectories that
intersect; in our example $f(t) = \cos(t^2 + t +\cos(t)) + \cos(8t)$.
Figure~\ref{fig:crossover} shows the signals $f(t)$ and
$f(t)+0.5X(t)$, together with their synchrosqueezed wavelet
transforms, as well as the ``ideal'' frequency profile given by the
instantaneous frequencies of the two components of $f$.
\newpage
\subsection{Extracting Individual Components from Synthesized data}
\begin{figure}[ht]
\begin{center}
\begin{minipage}{.19 \textwidth}
\begin{center}
  \includegraphics[width=0.9\textwidth, height=1.1
  in]{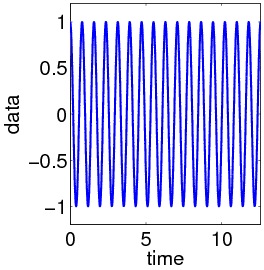}
\end{center}
\end{minipage}
\hspace*{-.002 \textwidth}
\begin{minipage}{.19 \textwidth}
\begin{center}
  \includegraphics[width=0.9\textwidth, height=1.1
  in]{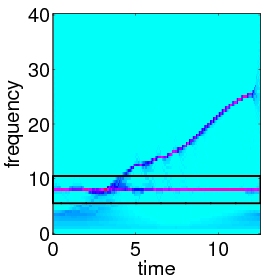}
\end{center}
\end{minipage}
\hspace*{-.002 \textwidth}
\begin{minipage}{.19 \textwidth}
\begin{center}
  \includegraphics[width=0.9\textwidth, height=1.1
  in]{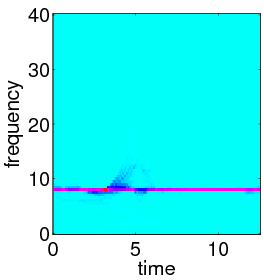}
\end{center}
\end{minipage}
\hspace*{-.002 \textwidth}
\begin{minipage}{.19 \textwidth}
\begin{center}
  \includegraphics[width=0.9\textwidth, height=1.1
  in]{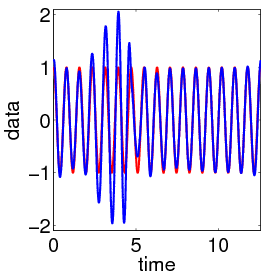}
\end{center}
\end{minipage}
\hspace*{-.002 \textwidth}
\begin{minipage}{.19 \textwidth}
\begin{center}
  \includegraphics[width=0.9\textwidth, height=1.1
  in]{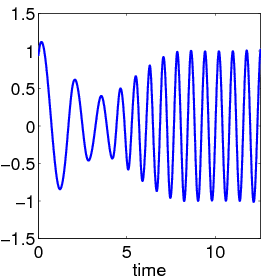}
\end{center}
\end{minipage}
%
\begin{minipage}{.19 \textwidth}
\begin{center}
  \includegraphics[width=0.9\textwidth, height=1.1
  in]{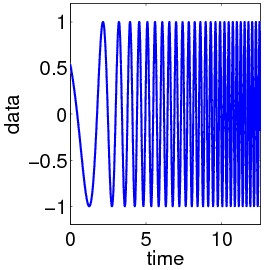}
\end{center}
\end{minipage}
\hspace*{-.002 \textwidth}
\begin{minipage}{.19 \textwidth}
\begin{center}
  \includegraphics[width=0.9\textwidth, height=1.1
  in]{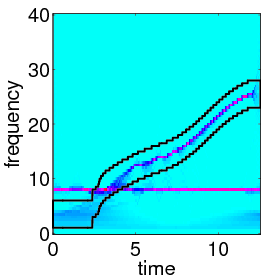}
\end{center}
\end{minipage}
\hspace*{-.002 \textwidth}
\begin{minipage}{.19 \textwidth}
\begin{center}
  \includegraphics[width=0.9\textwidth, height=1.1
  in]{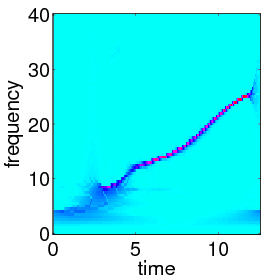}
\end{center}
\end{minipage}
\hspace*{-.002 \textwidth}
\begin{minipage}{.19 \textwidth}
\begin{center}
  \includegraphics[width=0.9\textwidth, height=1.1
  in]{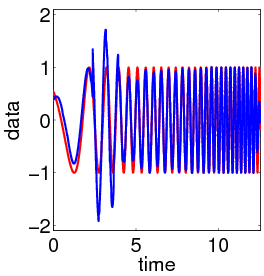}
\end{center}
\end{minipage}
\hspace*{-.002 \textwidth}
\begin{minipage}{.19 \textwidth}
\begin{center}
  \includegraphics[width=0.9\textwidth, height=1.1
  in]{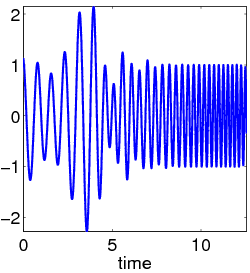}
\end{center}
\end{minipage}
%
\begin{minipage}{.19 \textwidth}
\begin{center}
  \includegraphics[width=0.9\textwidth, height=1.1
  in]{cross-imf1.png}
\end{center}
\end{minipage}
\hspace*{-.002 \textwidth}
\begin{minipage}{.19 \textwidth}
\begin{center}
  \includegraphics[width=0.9\textwidth, height=1.1
  in]{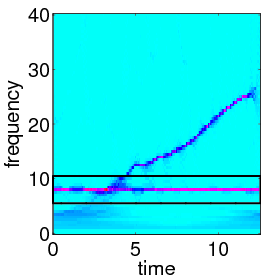}
\end{center}
\end{minipage}
\hspace*{-.002 \textwidth}
\begin{minipage}{.19 \textwidth}
\begin{center}
  \includegraphics[width=0.9\textwidth, height=1.1
  in]{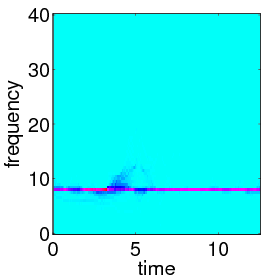}
\end{center}
\end{minipage}
\hspace*{-.002 \textwidth}
\begin{minipage}{.19 \textwidth}
\begin{center}
  \includegraphics[width=0.9\textwidth, height=1.1
  in]{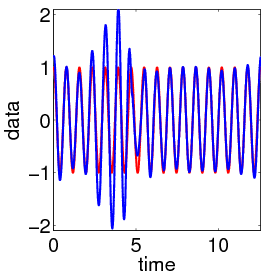}
\end{center}
\end{minipage}
\hspace*{-.002 \textwidth}
\begin{minipage}{.19 \textwidth}
\begin{center}
\includegraphics[width=0.9\textwidth, height=1.1 in]{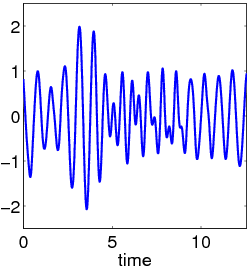}
\end{center}
\end{minipage}
\begin{minipage}{.19 \textwidth}
\begin{center}
  \includegraphics[width=0.9\textwidth, height=1.1
  in]{cross-imf2.png}
\end{center}
\end{minipage}
\hspace*{-.002 \textwidth}
\begin{minipage}{.19 \textwidth}
\begin{center}
  \includegraphics[width=0.9\textwidth, height=1.1
  in]{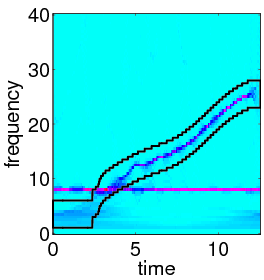}
\end{center}
\end{minipage}
\hspace*{-.002 \textwidth}
\begin{minipage}{.19 \textwidth}
\begin{center}
  \includegraphics[width=0.9\textwidth, height=1.1
  in]{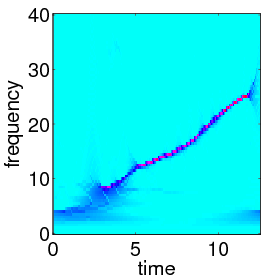}
\end{center}
\end{minipage}
\hspace*{-.002 \textwidth}
\begin{minipage}{.19 \textwidth}
\begin{center}
  \includegraphics[width=0.9\textwidth, height=1.1
  in]{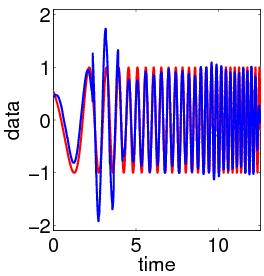}
\end{center}
\end{minipage}
\hspace*{-.002 \textwidth}
\begin{minipage}{.19 \textwidth}
\begin{center}
\includegraphics[width=0.9\textwidth, height=1.1 in]{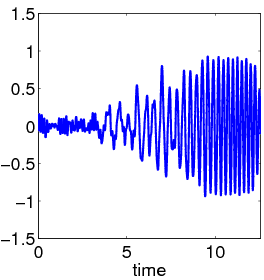}
\end{center}
\end{minipage}
\end{center}
\caption{
  \label{cross:comparison:imf} {\bf Comparing the decomposition into
    components $s_1(t)$ and $s_2(t)$ of the crossover signal
    $s(t)=s_1(t)+s_2(t)=\cos(8t)+\cos[t^2 + t +\cos(t)]$} {\bf Rows:}
  Noise-free situation in the first two rows; in the last two
  rows noise with SNR = $6.45$ dB was added to the mixed signal. In each
  case, $s_1$ is in the top row, and $s_2$ underneath. {\bf
    Columns:} Far left: true $s_j(t)$ $j=1,2$; Middle left: Zone marked on the
    synchrosqueezed transform for reconstruction of the component;
	Center: part of
  the synchrosqueezed transform singled out for the reconstruction of
  a putative $s_j$; Middle Right: the corresponding candidate $s_j(t)$
  according to the synchrosqueezed transform (plotted in blue over the
  original $s_j$, in red); Far right: candidate $s_j$
  according to EMD in the noiseless case, EEMD in the
  noisy case.  }
\end{figure}
In many applications listed in \cite{Costa:07, Cummings:04, Huang:98,
  HuangWu:08, WuHuang:09}, the desired end goal is the instantaneous
frequency trajectory or profile for the different components.  When
this is the case, the result of the synchrosqueezed wavelet transform,
as illustrated in the preceding subsection, provides a solution.

In other applications, however, one may wish to consider the
individual components themselves. These are obtained as an
intermediary result, before extracting instantaneous frequency
profiles, in the EMD and EEMD approaches. With synchrosqueezing, they
can be obtained in an additional step {\em after} the frequency
profiles have been determined.

Recall that, like most linear TF representations, the wavelet transform comes
equipped with reconstruction formulas,
\begin{eqnarray}
  f(t)&=& C_{\psi}\,\int_{-\infty}^{\infty} \int_0^{\infty} W_f(a,b) \,a^{-5/2} \,\psi\left(\frac{t-b}{a} \right)\,\ud a \ud b ~,\label{2_int_reconstr}\\
  \mbox{as well as }
  f(t)&=& C'_{\psi}\,\int_0^{\infty} W_f(a,t) \,a^{-3/2}\,\ud a
  = C'_{\psi}\,\mathfrak{Re}\left[\,\int_0^{\infty} T_f(t,\omega) \ud \omega\,\right],
\label{1_int_reconstr}
\end{eqnarray}
where $C_{\psi},\,C'_{\psi}$ are constants depending only on $\psi$.
For the signals of interest to us here, the synchrosqueezed
representation has, as illustrated in the preceding subsection, well
localized zones of concentration.  One can use these to select the
zone corresponding to one component, and then integrate, in the
reconstruction formulas, over the corresponding subregion of the
integration domain. In practice, it turns out that the best results
are obtained by using the reconstruction formula
(\ref{2_int_reconstr}).

We illustrate this with the crossover example from the previous
subsection:  Figure~\ref{cross:comparison:imf} shows the zones
selected in the synchrosqueezed transform plane as well as the 
corresponding reconstructed components.  This figure
also shows the components obtained for these signals by EMD for the
clean case, and by EEMD (more robust to noise than EMD) for the noisy
case; for this type of signal, the synchrosqueezed transform proposed here
seems to give a more easily interpretable result. 

Once the individual components are extracted, one can use them to get
a numerical estimate for the variation in time of the instantaneous
frequencies of the different components.  To illustrate this, we
revisit the chirp signal from the previous subsection.  Figure
\ref{fig:singlechirp:instfreq} shows the frequency curves obtained by
the synchrosqueezing approach; they are fairly robust with
respect to noise.  
\begin{figure}[ht]
\centering
\begin{minipage}{0.3\textwidth}
\begin{center}
  \includegraphics[width=0.8\textwidth, height=1.1
  in]{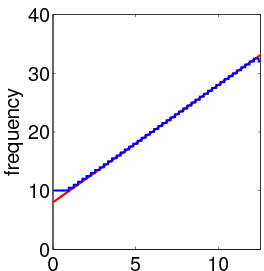}
\end{center}
\end{minipage}
\begin{minipage}{0.3\textwidth}
\begin{center}
  \includegraphics[width=0.8\textwidth, height=1.1
  in]{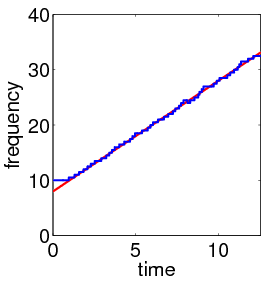}
\end{center}
\end{minipage}
\begin{minipage}{0.3\textwidth}
\begin{center}
  \includegraphics[width=0.8\textwidth, height=1.1
  in]{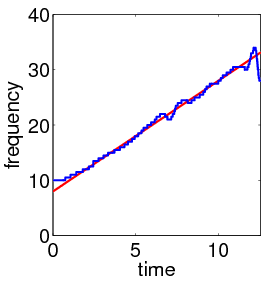}
\end{center}
\end{minipage}
\caption{\label{fig:singlechirp:instfreq} {\bf Instantaneous frequency
    curves extracted from the synchrosqueezed representation} Left:
  the instantaneous frequency of the clean single chirp signal
  estimated by synchrosqueezing; Middle: the instantaneous frequency
  of the fairly noisy single chirp signal (SNR=$-3.00$dB) estimated by
  synchrosqueezing. Right:the instantaneous frequency of the noisy
  single chirp signal (SNR=$-12.55$dB) estimated by synchrosqueezing.}
\end{figure}
\subsection{Applying the synchrosqueezed transform to some real data}

So far, all the examples shown concerned toy models or synthesized data.
In the subsection we illustrate the result on some real data sets, of medical origin.

{\bf 5.3.A $~$Surface Electromyography Data}

In this application, we ``clean up''
surface
electromyography (sEMG) data acquired from a healthy young female with
a portable system (QuickAmp). The sEMG electrodes, with sensors of
high-purity sintered Ag/AgCl, were placed on the triceps. The signal
was measured for 608 seconds, sampled at 500Hz. During the data
acquisition, the subject flexed/extended her elbow, according to a
protocol in which instructions to flex the right or left elbow were
given to the subject at not completely regular time intervals; the
subject did not know prior to each instruction which elbow she would
be told to flex, and the sequence of left/right choices was random.
The raw sEMG data $s_l(t)$ and $s_r(t)$ are shown in the left column in
Figure~\ref{signal}.

\begin{figure}[ht]
\begin{minipage}{.33\textwidth}	
\includegraphics[width=0.99\textwidth, height=1.5 in]{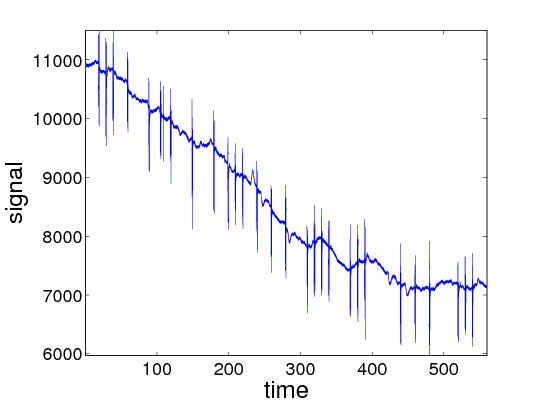}
$~~~~$
\end{minipage}
\hspace{.004\textwidth}
\begin{minipage}{.33\textwidth}	
\includegraphics[width=0.99\textwidth, height=1.5 in]{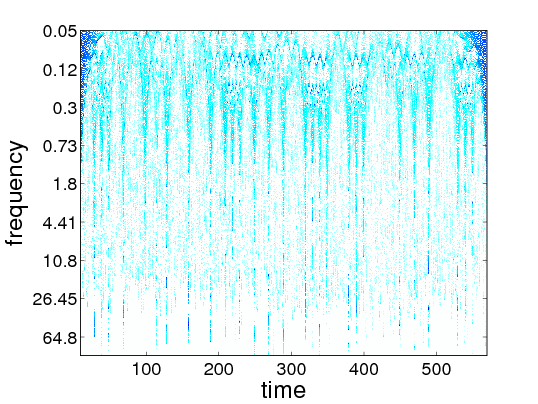}
\end{minipage}
\hspace{.004\textwidth}
\begin{minipage}{.33\textwidth}
\includegraphics[width=0.99\textwidth, height=1.5 in]{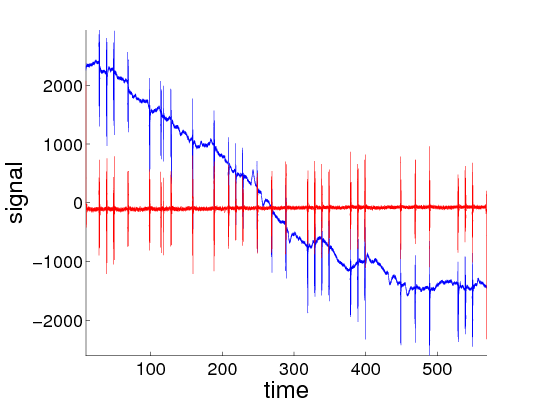}
$~$
\end{minipage}
\begin{minipage}{.33\textwidth}
\includegraphics[width=0.99\textwidth, height=1.5 in]{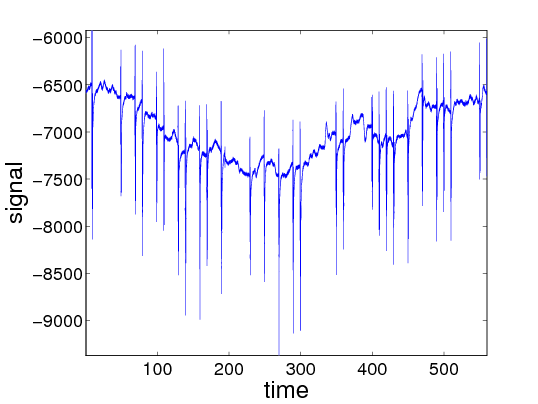}
$~$
\end{minipage}
\hspace{.004\textwidth}
\begin{minipage}{.33\textwidth}
\includegraphics[width=0.99\textwidth, height=1.5 in]{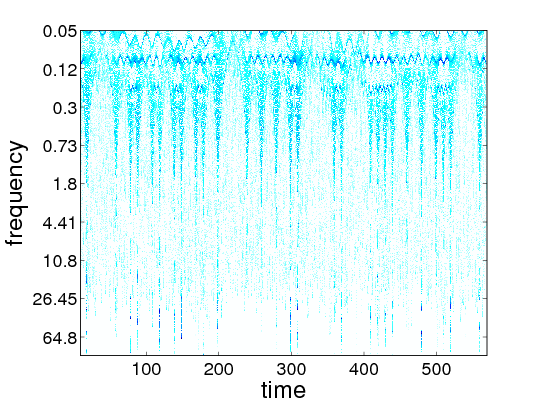}
$~$
\end{minipage}
\hspace{.004\textwidth}
\begin{minipage}{.33\textwidth}
\includegraphics[width=0.99\textwidth, height=1.5 in]{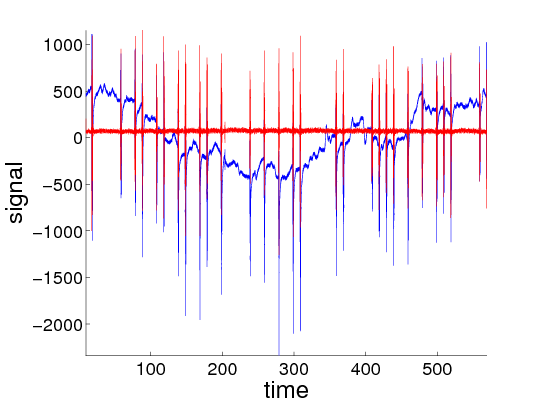}
$~$
\end{minipage}
\caption{{\em Left column:} The surface electromyography data described in the text. The erratic 
(and different) drifts present in these signals are pretty typical. The very short-lived
peaks correspond to the elbow-flexes by the subject. Note that the
peaks have, on average, the same amplitudes for the left and right triceps; they look
larger for the right triceps only because of a difference in scale.
{\em Middle column:} The synchrosqueezing transforms of the surface electromyography signals.
Coarse scales are near the top of these diagrams, finer scales are shown lower. The
peaks are clearly marked over a range of fine scales. {\em Right column:}
The red curve give the signals ``sans drift'', reconstructed by deleting the low frequency region in the synchrosqueezed transforms; comparison with the original sEMG signals (in blue) shows the R peaks are as sharp as in the original signals, and at precisely the
same location.
}\label{signal}
\end{figure}

The middle column in Figure~\ref{signal} 
shows the results $\mathcal{W}s_{\ell}(t,\omega)$,
$\mathcal{W}s_r(t,\omega)$ of our synchrosqueezing algorithm applied
to the two sEMG data sets. We used an implementation in which each
dyadic scale interval ($a\in[2^k,2^{k+1}]$) was divided into 32
equi-log-spaced bins.

The original surface electromyography signals show an erratic drift,
which medical researchers wish to remove without losing any sharpness
in the peaks. To achieve this, we identified the low frequency
components (i.e. the dominant components at frequencies below 1 Hz) in
the signal, and removed them before reconstruction. More, precisely,
we defined $\widetilde{s}_i(t)=\sum_{\xi\geq
  \xi_{i,\mbox{\tiny{cut-off}}}}\mathcal{T}s_{i}(t,\xi)$, with
frequency cut-off $\xi_{i,\mbox{\footnotesize{cut-off}}}(t)=
\omega_i(t)+\omega_0$, where $\omega_i(t)$ was the dominant component
for signal $i$ ($i=\ell$ or $r$) near 1 Hz with the highest frequency,
and $\omega_0$ a small constant offset.  The right column in Figure
\ref{signal} shows the results; the erratic drift has disappeared and
the peaks are well preserved. Note that a similar result can also be
obtained straightforwardly from a wavelet transform without any
synchrosqueezing.

{\bf 5.3.B. Electrocardiogram Data}

In this application we use synchrosqueezing to extract the heart rate
variability (HRV) from a real electrocardiogram (ECG) signal. The data
was acquired from a resting healthy male with a portable ECG machine
at sampling rate $1000$Hz for $600$ seconds. The samples were
quantized at $12$ bits across $\pm 10$ mV.  The raw ECG data $e(t)$
are (partially) shown in the left half of Figure~\ref{ecgsig}; the
right half of the figure shows a blow-up of 20 seconds of the same
signal.

\begin{figure}[ht]
\begin{minipage}{.65\textwidth}
\includegraphics[width=.99\textwidth, height= 2.2 in]{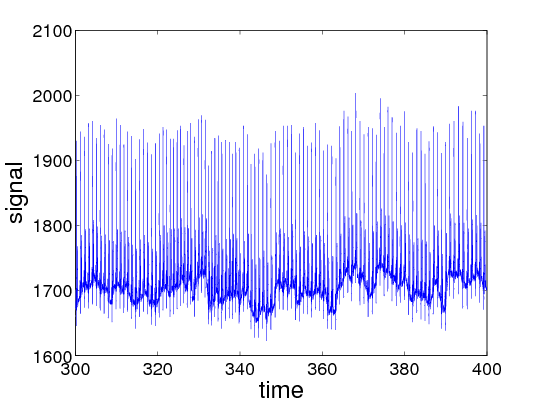}
\end{minipage}
\begin{minipage}{.34\textwidth}
\hspace*{-.1 \textwidth}
\includegraphics[width=1.2\textwidth, height= 2.2in]{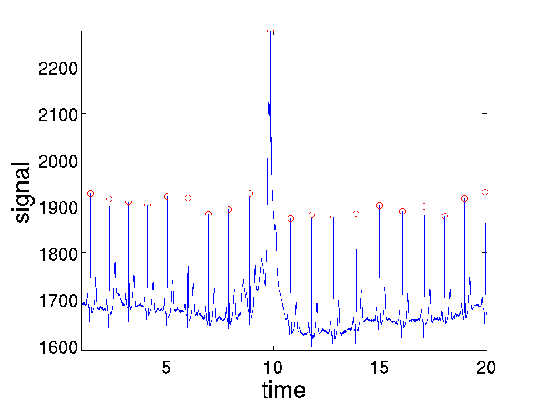}
\end{minipage}
\caption{{\bf The raw electrocardiogram data} {\em Left:} portion
  between 300 and 400 sec; {\em Right:} blow-up of the stretch between
  320 and 340 sec, with the R peaks marked.}\label{ecgsig}
\end{figure}

The strong, fairly regular spikes in the ECG are called the R peaks;
the heart rate variability (HRV) time series is defined as the
sequence of time differences between consecutive R peaks. (The
interval between two consecutive R peaks is also called the RR
interval.)  The HRV is important for both clinical and basic research;
it reflects the physiological dynamics and state of health of the
subject. (See, e.g., \cite{hrvbook} for clinical guidelines pertaining
to the HRV, and \cite{hrv2006} recent advances made in research.)  The
HRV can be viewed as a succession of snapshots of an averaged version
of the instantaneous heart rate.

The left half of Figure \ref{ecgsrwt} shows the synchrosqueezed
transform $T_e(\omega,t)$ of $e(t)$; in this case we used an
implementation in which each dyadic scale interval
($a\in[2^k,2^{k+1}]$) was divided into $128$ equi-log-spaced bins.
The synchrosqueezed transform $T_e(\omega,t)$ has a dominant line
$c(t)$ near $1.2$Hz, the support of which can be parametrized as
$\{(t,\omega_c(t)); \, t \in [0,80 \mbox{sec}]\,\}$.  The right half
of Figure \ref{ecgsrwt} tracks the dependence on $t$ of
$\omega_c(t)$. This figure also plot a (piecewise constant) function
$f(t)$ that tracks the HRV time series and that is computed as
follows: if $t$ lies between $t-i$ and $t_{i+1}$, the locations in
time for the $i$-th and $(i+1)$-st R peaks, then
$f(t)=[t_{i+1}-t_i]^{-1}$. The plot for $\omega(t)$ and $f(t)$ are
clearly highly correlated.

\begin{figure}[ht]
\begin{minipage}{.49 \textwidth}
  \includegraphics[width=0.99\textwidth, height =1.5
  in]{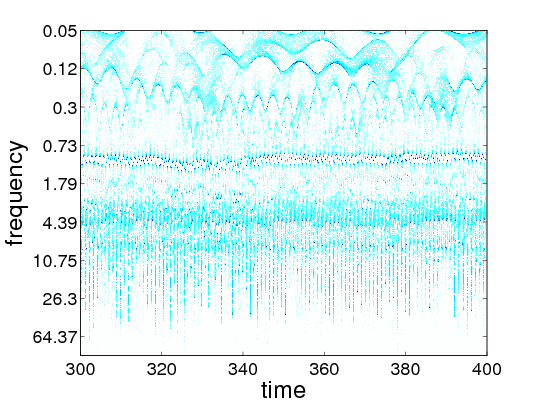}
\end{minipage}
\hspace*{.01 \textwidth}
\begin{minipage}{.49 \textwidth}
\includegraphics[width=.99\textwidth, height= 1.5 in]{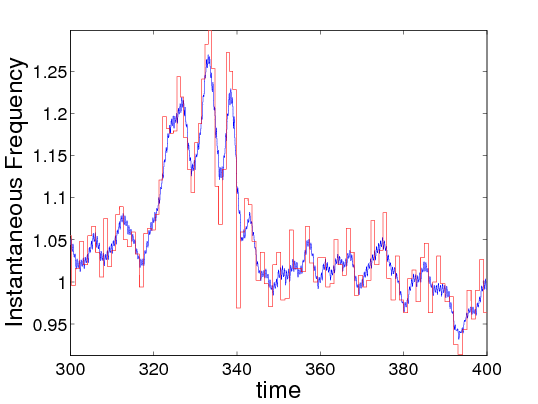}
\end{minipage}
\caption{{\em Left:} The synchrosqueezing transforms of the electrocardiogram signals
given in Fig. \ref{ecgsig}. {\em Right:} The blue curve shows the ``instantaneous heart rate'' $\omega(t)$ computed by tracking 
the support of the dominant curve in the synchrosqueezed transform $T_e$; the
red curve is the (piecewise constant) inverse of the successive $\mbox{RR}_i$
\label{ecgsrwt}}
\end{figure}

\section{Acknowledgments}
The authors are grateful to the Federal Highway Administration, which
supported this research via FHWA grant DTFH61-08-C-00028. They also
thank Prof. Norden Huang and Prof. Zhaohua Wu for many stimulating
discussions and their generosity in sharing their code and
insights. They also thank MD. Shu-Shya Hseu and Prof. Yu-Te Wu for
providing the real medical signal.  

\bibliographystyle{amsplain}
\bibliography{main}

\end{document}